\newtheorem{theorem}{Theorem}[section]
\newtheorem{lemma}[theorem]{Lemma}
\newtheorem{proposition}[theorem]{Proposition}
\newtheorem{definition}[theorem]{Definition\rm}
\newtheorem{remark}{\it Remark\/}
\newcommand{\diam}{\mathrm{diam}\,}
\newcommand{\vvv}{|\!|\!|}
\newcommand{\CC}{\mathbb{C}}
\newcommand{\Gr}{\mathrm{Gr}}
\newcommand{\Mat}{\mathrm{Mat}}
\begin{document}
\title[Lower bound for the joint spectral radius]{A rapidly-converging lower bound for the joint spectral radius via multiplicative ergodic theory}\author{Ian D. Morris}
\begin{abstract}
We use ergodic theory to prove a quantitative version of a theorem of M. A. Berger and Y. Wang, which relates the joint spectral radius of a set of matrices to the spectral radii of finite products of those matrices. The proof rests on a theorem asserting the existence of a continuous invariant splitting for certain matrix cocycles defined over a minimal homeomorphism and having the property that all forward products are uniformly bounded. MSC primary 15A18, 37H15, 65F15, secondary 37M25.\end{abstract}
\maketitle
\section{Introduction}
Let $\mathsf{A}$ be a bounded set of $d \times d$ complex matrices. The joint spectral radius of $\mathsf{A}$, introduced by G.-C. Rota and G. Strang in \cite{RS}, is defined to be the quantity
\begin{equation}\label{oryx}
\varrho(\mathsf{A}):= \lim_{n \to \infty} \sup\left\{\|A_n\cdots A_1\|^{1/n}\colon A_i \in \mathsf{A}\right\},\end{equation}
where $\|\cdot\|$ denotes any norm on $\mathbb{C}^d$. This is easily seen to yield a finite value which is well-defined with respect to the choice of norm. The joint spectral radius arises naturally in a range of topics including control and stability \cite{Ba,Gu,Koz}, coding theory \cite{MO}, wavelet regularity \cite{DL,DL0,Mae2}, numerical solutions to ordinary differential equations \cite{GZ}, and combinatorics \cite{DST}. The problem of computing the joint spectral radius of a finite set of matrices has therefore attracted substantial research interest \cite{BN,Gr,GWZ,Koz,LW,Mae,Parr,BT1,Wirth}. In this article we shall prove a new estimate relevant to the computation of the joint spectral radius.

Let $\Mat_d(\mathbb{C})$ denote the set of all $d \times d$ complex matrices. The following theorem was proved by M. A. Berger and Y. Wang \cite{BW}, having originally been conjectured by I. Daubechies and J. C. Lagarias \cite{DL}:
\begin{theorem}[Berger-Wang formula]\label{BWF}
Let $\mathsf{A} \subset \Mat_d(\mathbb{C})$ be bounded. Then
\begin{equation}\label{groke}\varrho(\mathsf{A}) = \limsup_{n \to \infty} \sup\left\{\rho(A_n \cdots A_1)^{1/n}\colon A_i \in \mathsf{A}\right\},\end{equation}
where $\rho(A)$ denotes the ordinary spectral radius of a matrix $A$.
\end{theorem}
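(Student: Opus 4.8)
Write $\bar\varrho(\mathsf{A})$ for the right-hand side of \eqref{groke}. The inequality $\bar\varrho(\mathsf{A})\le\varrho(\mathsf{A})$ is immediate from $\rho(M)\le\|M\|$ for any submultiplicative operator norm, so the content of the theorem is $\varrho(\mathsf{A})\le\bar\varrho(\mathsf{A})$, and I would open with two reductions. First, $\bar\varrho(\mathsf{A})$ equals the plain supremum $\sup\{\rho(A_n\cdots A_1)^{1/n}:n\geq1,\ A_i\in\mathsf{A}\}$: for a product $R$ of length $m$ the powers $R^{k}$ have length $km$ and $\rho(R^{k})^{1/(km)}=\rho(R)^{1/m}$, so that value is already attained at arbitrarily large lengths. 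Second, it suffices to treat compact $\mathsf{A}$ with $\varrho(\mathsf{A})=1$: if $\varrho(\mathsf{A})=0$ then every sufficiently long product is arbitrarily small in norm and both sides vanish; otherwise rescale; and passing to the closure changes neither side, since matrix multiplication and $\rho$ are continuous.

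Next I would induct on $d$ to reduce to the irreducible case. If $\mathsf{A}$ has a common invariant subspace, a change of basis puts every $A\in\mathsf{A}$ in block upper-triangular form whose diagonal blocks form compact sets $\mathsf{B}\subset\Mat_{d_1}(\CC)$ and $\mathsf{D}\subset\Mat_{d_2}(\CC)$ with $d_1,d_2<d$. The diagonal blocks of a product are the products of the respective blocks, so the spectrum of a product is the union of the two corner spectra and $\bar\varrho(\mathsf{A})=\max(\bar\varrho(\mathsf{B}),\bar\varrho(\mathsf{D}))$, while a routine geometric-series estimate on the off-diagonal block gives $\varrho(\mathsf{A})=\max(\varrho(\mathsf{B}),\varrho(\mathsf{D}))$; the inductive hypothesis applied to $\mathsf{B}$ and $\mathsf{D}$ then closes the case, and $d=1$ is trivial. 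So from now on $\mathsf{A}$ is compact and irreducible with $\varrho(\mathsf{A})=1$.

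For this case I would invoke the classical fact that an irreducible set carries an \emph{extremal norm} $|\cdot|$, meaning $|Av|\le|v|$ for all $A\in\mathsf{A}$. In this norm every product of elements of $\mathsf{A}$ has operator norm at most $1$, whereas Fekete's lemma applied to the subadditive sequence $n\mapsto\log\sup\{\|A_n\cdots A_1\|:A_i\in\mathsf{A}\}$ gives $\sup\{\|A_n\cdots A_1\|:A_i\in\mathsf{A}\}\geq\varrho(\mathsf{A})^{n}=1$; by compactness of $\mathsf{A}$ this supremum is attained, so for each $n$ there is a product $P_n=A_n\cdots A_1$ of operator norm exactly $1$. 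Choose a unit vector $v$ with $|P_nv|=1$ and put $w_k=A_k\cdots A_1v$; then $1=|w_0|\ge|w_1|\ge\cdots\ge|w_n|=1$, forcing $|w_k|=1$ for every $k$. Now fix $\varepsilon>0$ and take $n$ exceeding the $\varepsilon$-covering number of the unit sphere, so that $|w_j-w_i|<\varepsilon$ for some $i<j$; then $Q:=A_j\cdots A_{i+1}$ has length $\ell=j-i\ge1$, operator norm at most $1$, and $|Qw_i-w_i|<\varepsilon$. Since $(Q-I)w_i$ is small while the remaining singular values of $Q-I$ stay bounded by a dimensional constant, $|\det(Q-I)|\le C_d\varepsilon$; writing $|\det(Q-I)|=\prod_{i}|\lambda_i-1|$ over the eigenvalues $\lambda_1,\dots,\lambda_d$ of $Q$, some eigenvalue lies within $C_d\varepsilon^{1/d}$ of $1$, whence $\rho(Q)\ge1-C_d\varepsilon^{1/d}$ and therefore $\rho(Q)^{1/\ell}\ge1-C_d\varepsilon^{1/d}$. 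Letting $\varepsilon\to0$ yields $\bar\varrho(\mathsf{A})\ge1=\varrho(\mathsf{A})$.

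I expect the real obstacle to be the existence of the extremal norm, which is the only point at which irreducibility does genuine work: one takes a sequence of norms whose induced operator norms on $\mathsf{A}$ tend to $\varrho(\mathsf{A})$, passes to a subsequential limit of their unit balls, and must rule out degeneration of the limiting convex body — a degeneration would produce a proper invariant subspace. From the ergodic-theoretic viewpoint of this paper the same phenomenon reappears as the assertion that $\varrho(\mathsf{A})$, which equals the supremum of the top Lyapunov exponent over shift-invariant measures on $\mathsf{A}^{\NN}$, is nonetheless approached along periodic orbits even though that exponent is only upper semicontinuous in the measure; the missing lower semicontinuity is exactly what the extremal norm — and, in sharper form, the continuous invariant splitting established later in the paper — serves to repair.
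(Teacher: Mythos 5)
The paper does not actually give a proof of Theorem~\ref{BWF}: it is taken as known, cited to Berger--Wang with alternative proofs in \cite{Bochi,E,SWP}, and only a one-paragraph sketch of Elsner's proof appears in the introduction as motivation. Your argument is, up to presentational choices, precisely Elsner's proof, which is the one the paper sketches. Both reduce to $\varrho(\mathsf{A})=1$, both pass to the product-bounded case --- you do this by inducting on $d$ through a common invariant subspace and then invoking the extremal norm for irreducible compact sets, while the paper's Lemma~\ref{redux} invokes Elsner's block-triangular reduction lemma directly; these are two phrasings of the same reduction --- and both then use compactness/pigeonhole on the trajectory of a unit vector to locate a subproduct $Q$ carrying a nearly fixed unit vector, from which a spectral-radius lower bound follows. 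One small refinement worth noting: you need pigeonhole only on the sequence of vectors $w_k$ (the determinant argument converts ``$Qw_i\approx w_i$'' into an eigenvalue near $1$ cleanly), whereas the paper's summary of Elsner mentions pigeonhole in both $\Mat_d(\CC)$ and $\CC^d$; your version is slightly leaner. The one piece you only sketch --- existence of an extremal norm for a compact irreducible set with $\varrho=1$ --- is indeed the technically nontrivial input, and your proposed convex-body limit argument is a correct route to it. Finally, your closing paragraph correctly identifies that the paper's actual contribution, Theorem~\ref{QBWF}, replaces this soft pigeonhole by the much harder construction of a H\"older-continuous invariant splitting over a minimal set (Theorem~\ref{Tech}) together with the Bressaud--Quas periodic approximation bound, precisely in order to make ``some $j>i$ with $w_j$ close to $w_i$'' quantitative in the length of the product.
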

Some alternative proofs are given in \cite{Bochi,E,SWP}. In this article we shall study the rate of convergence in the expression \eqref{groke}. This has potential implications for some approaches to the computation of the joint spectral radius such as the algorithm given by G. Gripenberg \cite{Gr}.

Let $\|\cdot\|$ be any norm on $\mathbb{C}^d$. For each $n \in \mathbb{N}$ define
\[\varrho_n^+(\mathsf{A},\|\cdot\|)= \sup\left\{\|A_n\cdots A_1\|^{1/n} \colon A_i \in \mathsf{A}\right\},\]
\[\varrho_n^-(\mathsf{A})= \sup\left\{\rho(A_n\cdots A_1)^{1/n} \colon A_i \in \mathsf{A}\right\}.\]
For fixed $\mathsf{A}$ it is clear that $\varrho_{n+m}^+(\mathsf{A},\|\cdot\|) \leq \varrho^+_n(\mathsf{A},\|\cdot\|) \varrho_m^+(\mathsf{A},\|\cdot\|)$ for all $n,m \in \mathbb{N}$, which implies that the limit in \eqref{oryx} may be replaced by an infimum. Conversely, since $\rho(A^m)^{1/m}=\rho(A)$ for all $m \in \mathbb{N}$ and any matrix $A$, one may easily show that  $\varrho_{nm}^-(\mathsf{A}) \geq \varrho_n^-(\mathsf{A})$ for every $n,m \in \mathbb{N}$ and hence  the limit superior in \eqref{groke} is also a supremum. In general this limit superior can fail to be a limit, a simple example being
\[\mathsf{A} = \left\{\left(\begin{array}{cc}0&2\\\frac{1}{2}&0\end{array}\right),\left(\begin{array}{cc}0&1\\1&0\end{array}\right)\right\}.\]
In this article we shall present a proof of the following theorem, which extends Theorem \ref{BWF} in the case where $\mathsf{A}$ is finite:
\begin{theorem}\label{QBWF}
Let $\mathsf{A}$ be a finite set of $d \times d$ complex matrices. Then for every integer $r \in \mathbb{N}$,
 \[\left|\varrho(\mathsf{A})- \max_{1 \leq k \leq n} \varrho_k^-(\mathsf{A}) \right| = O\left(\frac{1}{n^r}\right).\]
\end{theorem}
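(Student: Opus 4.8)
The plan is to deduce Theorem~\ref{QBWF} from the continuous‑splitting theorem for bounded matrix cocycles that forms the technical core of this article, by means of a periodic‑orbit construction. First I would dispose of some reductions. If $\varrho(\mathsf A)=0$ then every product of members of $\mathsf A$ has zero spectral radius (apply Gelfand's formula to the powers of such a product, together with $\varrho(\mathsf A)=\inf_m\varrho_m^+(\mathsf A,\|\cdot\|)$ for submultiplicative $\|\cdot\|$), so $\varrho_k^-(\mathsf A)=0$ for all $k$ and the estimate is trivial; otherwise rescale so that $\varrho(\mathsf A)=1$. The same argument gives $\rho(A_n\cdots A_1)^{1/n}\le\varrho(\mathsf A)$ for every word, hence $\max_{1\le k\le n}\varrho_k^-(\mathsf A)\le 1$, so only the reverse inequality needs proof; and since $t^{1/k}\ge t$ for $t\in[0,1]$ and $k\ge 1$, it suffices to produce, for each fixed $r$ and all large $n$, a product $P$ of some length $k\le n$ with $\rho(P)\ge 1-O(n^{-r})$, because then $\max_{1\le j\le n}\varrho_j^-(\mathsf A)\ge\varrho_k^-(\mathsf A)\ge\rho(P)^{1/k}\ge\rho(P)$. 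Finally, writing the elements of $\mathsf A$ simultaneously in block upper‑triangular form adapted to a maximal chain of common invariant subspaces reduces matters to the case of irreducible $\mathsf A$: the joint spectral radius is then the maximum of those of the diagonal blocks, and an extremal word for $\mathsf A$ restricts to one for some block.

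For irreducible $\mathsf A$ there is a Barabanov norm $\|\cdot\|$, for which $\max_{A\in\mathsf A}\|Av\|=\|v\|$ and hence $\|A_n\cdots A_1v\|\le\|v\|$ for every word. Passing to the full shift $(\Sigma,\sigma)=(\mathsf A^{\mathbb Z},\sigma)$ with the locally constant cocycle $\mathcal A(\omega)=\omega_0$, for which $\mathcal A^{(n)}(\omega)=\omega_{n-1}\cdots\omega_0$, I would use a compactness argument on two‑sided Barabanov trajectories to extract a minimal subshift $Z\subseteq\Sigma$, together with a continuous section of the associated projective bundle witnessing $\varrho(\mathsf A)=1$, along which the forward products $\mathcal A^{(n)}$ are uniformly bounded and bounded below. (If $Z$ were a single periodic orbit this would already exhibit a finite product of spectral radius exactly $1$, so that case can be set aside.) The continuous‑splitting theorem then applies to $\mathcal A$ over the minimal homeomorphism $(Z,\sigma)$: it yields a continuous $\sigma$‑invariant splitting $\mathbb C^d=E_0(\omega)\oplus E_1(\omega)$ with $\mathcal A^{(n)}|_{E_1(\omega)}$ uniformly exponentially contracting, say $\|\mathcal A^{(n)}(\omega)|_{E_1(\omega)}\|\le C\theta^n$ with $\theta<1$, and $\mathcal A^{(n)}|_{E_0(\omega)}$ of bounded distortion, equivalently conjugate through a continuous family of norms to a cocycle of linear isometries. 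Since the splitting is dominated and $\mathcal A$ is locally constant, $\omega\mapsto(E_0(\omega),E_1(\omega))$ is H\"older continuous, with some exponent $\alpha>0$.

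With this in hand I would manufacture the required products. Fix $\omega\in Z$ and an integer $\ell$ for which $\sigma^{\ell}\omega$ is close to $\omega$, and set $y=(\omega_0\cdots\omega_{\ell-1})^{\infty}\in\Sigma$, so that $\mathcal A^{(\ell)}(y)=\mathcal A^{(\ell)}(\omega)$ is a product of $\ell$ members of $\mathsf A$. The map $\mathcal A^{(\ell)}(\omega)$ carries $E_0(\omega)$ onto the H\"older‑nearby subspace $E_0(\sigma^{\ell}\omega)$ and contracts $E_1(\omega)$ by a factor $O(\theta^{\ell})$; composing with the projection back onto the splitting at $\omega$ therefore produces an almost block‑diagonal matrix whose $E_0$‑block is, through the conjugacy to isometries, a near‑isometry of $E_0(\omega)$ — near both because the bundles $E_0(\sigma^{\ell}\omega)$ and $E_0(\omega)$ are H\"older‑close and because the conjugating norms vary continuously from $\omega$ to $\sigma^{\ell}\omega$ — and which is spectrally isolated from the exponentially small $E_1$‑block. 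A standard perturbation estimate then yields an eigenvalue of $\mathcal A^{(\ell)}(\omega)$ of modulus at least $1-\eta_Z(d(\sigma^{\ell}\omega,\omega))$, where $\eta_Z$ is a modulus of continuity built from the H\"older exponent of the splitting and the modulus of continuity of the conjugating norms, so that $\varrho_{\ell}^-(\mathsf A)\ge\rho(\mathcal A^{(\ell)}(\omega))^{1/\ell}\ge 1-\eta_Z(d(\sigma^{\ell}\omega,\omega))$. Uniform recurrence of the minimal subshift $(Z,\sigma)$ supplies such near‑returns $\ell$ at times controlled by its recurrence function, and the crux — the step I expect to be the main obstacle — is to choose $\omega$, $\ell$ and the target closeness so that the error decays faster than every power of $n$. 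Finiteness of $\mathsf A$ is decisive here: it makes $\mathcal A$ locally constant, so that the invariant bundles and the conjugating norms are H\"older along stable and unstable sets, and it forces $Z$ to be a minimal subshift over a finite alphabet; establishing that the recurrence of $Z$, balanced against this H\"older regularity, does force the closing error to beat every polynomial in $n$ is the technical heart of the argument. Assembling these estimates over all $n$ then gives $\varrho(\mathsf A)-\max_{1\le k\le n}\varrho_k^-(\mathsf A)=O(n^{-r})$ for every $r\in\mathbb N$.
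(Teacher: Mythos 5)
Your architecture is essentially the paper's: normalise to $\varrho(\mathsf A)=1$, reduce to the product-bounded case, pass to the shift on $\mathsf A^{\mathbb Z}$, extract a minimal set $Z$ carrying a H\"older continuous invariant splitting with an exponentially contracted complement, and then use near-periodicity along $Z$ to produce finite words whose spectral radius is close to $1$. The local estimate you sketch (near-block-diagonal form plus spectral separation from the small block) is a workable variant of the paper's cone argument; that part is fine.

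The gap is in the step you yourself flag as the ``technical heart,'' and it is not merely technical: your proposed source of near-periodicity does not deliver a superpolynomial rate. You take a single $\omega\in Z$ and look for $\ell\le n$ with $d(\sigma^\ell\omega,\omega)$ small, i.e.\ a near-return of the \emph{orbit of $\omega$} to itself, controlled by the recurrence function of the subshift $Z$. For a subshift over an alphabet of size $s$, the number of $(2m+1)$-blocks is at most $s^{2m+1}$, so pigeonhole among $\omega,\sigma\omega,\dots,\sigma^{n}\omega$ gives a return $d(\sigma^\ell\omega',\omega')\le 2^{-m}$ for some $\omega'$ in the orbit and some $\ell\le n$ only once $n\gtrsim s^{2m}$, i.e.\ $m\sim\log n$, hence $2^{-m}\sim n^{-c}$. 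There is no general improvement on this for minimal subshifts: their recurrence function can grow like the block complexity, which is generically exponential. Feeding $n^{-c}$ through the H\"older exponent $\alpha$ (which is not explicit) gives at best $\bigl|\varrho(\mathsf A)-\max_{k\le n}\varrho_k^-(\mathsf A)\bigr|=O(n^{-\alpha c})$ for some unknown constant, which is strictly weaker than the theorem (and indeed weaker than the Bochi estimate already cited in the introduction). The paper avoids this by using a genuinely different approximation notion, namely the Bressaud--Quas theorem (Proposition~\ref{urghurghurgh}): it produces a periodic point $y$ of period $r_n\le n$ in the \emph{ambient full shift} $\mathsf A^{\mathbb Z}$ whose entire orbit stays within $2^{-m_n}$ of the \emph{set} $Z$, with $m_n/\log n\to\infty$. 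Shadowing the set $Z$ is a much weaker demand than returning close to a single orbit of $Z$; it is precisely this relaxation that breaks the pigeonhole barrier and yields the superpolynomial rate, and it is a substantive theorem, not something that falls out of uniform recurrence plus finiteness of the alphabet. Without that input (or an equivalent), your argument stalls at a fixed polynomial rate.

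Two smaller remarks. First, your reduction via a maximal common invariant flag is fine but you must also reduce to the product-bounded case (which the paper does via \cite[Lemma~4]{E}); irreducibility plus $\varrho=1$ does deliver a Barabanov norm, but you should state that this is the role irreducibility plays. Second, when you pass to the periodic word $y=(\omega_0\cdots\omega_{\ell-1})^\infty$, note that its orbit is close to that of $\omega$ only over a window of length $\approx\ell+2m$; if you switch to the Bressaud--Quas periodic orbit the relevant cone estimate has to be propagated around the full period, which is exactly what the paper's Lemmas~\ref{cone1} and~\ref{cone2} are organised to do.
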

Theorem \ref{QBWF} implies in particular that if we wish to compute $\varrho(\mathsf{A})$ to within accuracy $\varepsilon$ by means of brute-force estimation of the values $\varrho^-_n(\mathsf{A})$, then the number of matrix products which must be evaluated increases at a slower-than-stretched-exponential rate as a function of $1/\varepsilon$. However, it should be noted that the arguments used in this paper do not seem to be well-suited to the production of an \emph{effective} estimate for the quantity $\varrho(\mathsf{A})$.

Two estimates related to Theorem \ref{QBWF} have been established previously. By a theorem of J. Bochi \cite{Bochi}, there exist for each $d \in \mathbb{N}$ a constant $C_d>0$ and an integer $m \in \mathbb{N}$ such that $\varrho(\mathsf{A}) \leq C_d\max_{1 \leq k \leq m} \varrho^-_k(\mathsf{A})$ for every bounded set $\mathsf{A} \subset \Mat_d(\mathbb{C})$. An easy consequence is the estimate
\[\left|\varrho(\mathsf{A})- \max_{1 \leq k \leq mn} \varrho_{k}^-(\mathsf{A}) \right| \leq \left(1-C^{-1/n}_d\right)\varrho(\mathsf{A}) = O\left(\frac{1}{n}\right).\]
In the other direction, F. Wirth \cite{Wirth} gives the general bound
\[\left|\varrho(\mathsf{A})-\varrho_n^+(\mathsf{A},\|\cdot\|)\right| = O\left(\frac{\log n}{n}\right)\]
for any norm $\|\cdot\|$ on $\mathbb{C}^d$ and bounded set $\mathsf{A}\subset \Mat_d(\mathbb{C})$. This estimate improves to $O(1/n)$ if it is assumed that there does not exist a linear space $V$ such that $\{0\} \subset V \subset \mathbb{C}^d$ and $AV \subseteq V$ for every $A \in \mathsf{A}$. Unlike Bochi's estimate, the constant in Wirth's estimate may vary between sets of matrices $\mathsf{A}$. The example
\[\mathsf{A} = \left\{\left(\begin{array}{cc}2&2\\0&0\end{array}\right),\left(\begin{array}{cc}1&1\\1&1\end{array}\right)\right\}\]
shows that Wirth's estimate cannot be improved directly: taking $\|\cdot\|$ to be the Euclidean norm we obtain $\varrho_n^+(\mathsf{A},\|\cdot\|) = 2^{1+1/2n}$ for each $n \in \mathbb{N}$, whereas $\varrho^-_1(\mathsf{A})=2$ and hence $\varrho(\mathsf{A})=2$.

The proof of Theorem \ref{QBWF} has some points of resemblance to the proof of Theorem \ref{BWF} given by L. Elsner \cite{E}, which we now elaborate upon. Elsner's proof runs essentially as follows. If $\varrho(\mathsf{A})=0$ then the result is trivially true. Otherwise, by normalising we may take $\varrho(\mathsf{A})=1$. We then reduce to the case where a uniform bound exists for products of elements of $\mathsf{A}$, and hence there exists a compact subset of $\Mat_d(\mathbb{C})$ which contains $\{A_{n}\ldots A_{1} \colon A_i \in \mathsf{A}\}$ for every $n$. By using the pigeonhole principle on open $\varepsilon$-balls in $\Mat_d(\mathbb{C})$ and in $\mathbb{C}^d$, we can then guarantee the existence of a finite sequence $A_1,\ldots,A_n$ and a vector $v$ belonging to the unit sphere of $\mathbb{C}^d$ such that $A_n\cdots A_1v$  is close to $v$ and therefore the spectral radius of $A_{n}\cdots A_{1}$ is close to 1.

In our proof of Theorem \ref{QBWF} we make this strategy quantitative, replacing the pigeonhole principle with a more delicate recurrence argument. In order to achieve this we first prove a theorem describing the dynamical structure of matrix sequences $(A_i)$ with the property that $\|A_{n}\cdots A_{1}\|$ is large for all $n$, and additionally we achieve some understanding of the structure of the orbits in $\mathbb{C}^d$ which are induced by the action of such sequences. The bulk of this paper, therefore, is concerned with proving a theorem on the dynamical structure of these `extremal' sequences. We describe these ideas in detail in the following section.
\section{Linear cocycles}

At this point it is convenient to establish some notation and definitions. In the remainder of this article the symbol $\|\cdot\|$ shall be used to denote the Euclidean norm on $\mathbb{C}^d$, whereas the symbol $\vvv\cdot\vvv$ shall be used to denote an \emph{extremal norm} on $\mathbb{C}^d$, which will be defined shortly. In either case we shall also use the symbols $\|\cdot\|$ and $\vvv \cdot\vvv$ to denote the corresponding operator norms induced on $\Mat_d(\mathbb{C})$. Throughout this article we adhere to the convention  $\log 0 := -\infty$. 

Let $T \colon X \to X$ be a continuous transformation of a compact metric space. A \emph{cocycle} over $T$ with values in the complex matrices is a function $\mathcal{A} \colon X \times \mathbb{N} \to \Mat_d(\mathbb{C})$ such that for each $x \in X$ and $n,m\in \mathbb{N}$
\[\mathcal{A}(x,n+m) = \mathcal{A}(T^nx,m)\mathcal{A}(x,m).\]
We say that the cocycle $\mathcal{A}$ is continuous if $\mathcal{A}(\cdot,n)$ is a continuous function from $X$ to $\Mat_d(\mathbb{C})$ for each $n \in \mathbb{N}$. Abusing notation somewhat, we shall sometimes denote $\mathcal{A}(x,1)$ simply by $\mathcal{A}(x)$. Since for each $x, n$
\[\mathcal{A}(x,n) = \mathcal{A}(T^{n-1}x)\cdots\mathcal{A}(Tx)\mathcal{A}(x)\]
the cocycle $\mathcal{A}\colon X \times \mathbb{N} \to \Mat_d(\mathbb{C})$ is completely determined by the function $\mathcal{A} \colon X \to \Mat_d(\mathbb{C})$. Whilst it will always be the case in this article that the map $T$ is a homeomorpism, we do not assume that the values of the function $\mathcal{A}$ are invertible matrices, and so we cannot in general extend $\mathcal{A}$ to an invertible cocycle defined on $X \times \mathbb{Z}$.
 
For $0 \leq p \leq d$ we let $\Gr(p,d)$ denote the set of all $p$-dimensional subspaces of $\mathbb{C}^d$. This set may be identified with the set of all orthogonal projections from $\mathbb{C}^d$ onto a $p$-dimensional subspace. We equip $\Gr(p,d)$ with the standard metric given by
\[d_{\Gr}(V,W) := \|P^{\perp}_V - P^{\perp}_W\|\]
where $P^{\perp}_Z$ denotes the linear map given by orthogonal projection onto $Z$. This metric makes $\Gr(p,d)$ a compact metric space.
We shall say that a function $\mathcal{V} \colon X \to \Gr(p,d)$ is \emph{forward-invariant} under a cocycle $\mathcal{A}$ if $\mathcal{A}(x,n)\mathcal{V}(x) \subseteq \mathcal{V}(T^nx)$ for all $x \in X$ and $n \in\mathbb{N}$.

We begin by establishing the following general theorem which will later be applied to study matrix cocycles associated to a compact set $\mathsf{A} \subset \Mat_d(\mathbb{C})$.
\begin{theorem}\label{onepointone}
Let $T \colon X \to X$ be a minimal homeomorphism of a compact metric space, and let $\mathcal{A} \colon X \times \mathbb{N} \to \Mat_d(\CC)$ be a continuous linear cocycle. Suppose that there exists $M>0$ such that $\|\mathcal{A}(x,n)\| \leq M$ for all $x \in X$ and all $n \in \mathbb{N}$. 
Then there exist an integer $0 \leq p \leq d$ and continuous forward-invariant functions $\mathcal{V} \colon X \to \Gr(p,d)$, $\mathcal{W} \colon X \to \Gr(d-p,d)$ such that $\mathcal{V}(x) \oplus \mathcal{W}(x) = \mathbb{C}^d$ for all $x \in X$. Moreover there exist constants $C,\delta>0$ and $\xi \in (0,1)$ such that for all $x \in X$ and $n \in \mathbb{N}$, $\|\mathcal{A}(x,n)v\| \geq \delta$ for every $v \in \mathcal{V}(x)$ and $\|\mathcal{A}(x,n)w\| \leq C\xi^n\|w\|$ for every $w \in \mathcal{W}(x)$.

The moduli of continuity of $\mathcal{V}$ and $\mathcal{W}$ admit the following description. If $n \in \mathbb{N}$ is given, suppose that $x,y \in X$ satisfy
\[\max\left\{ \|\mathcal{A}(x,2n) - \mathcal{A}(y,2n)\|,\|\mathcal{A}(x,n)-\mathcal{A}(y,n)\|\right\}\leq\delta\xi^n.\]
Then $d_{\Gr}(\mathcal{V}(x),\mathcal{V}(y)) \leq \tilde C \xi^n$ for some constant $\tilde C>0$. Similarly, if $x,y \in X$ satisfy
\[\|\mathcal{A}(x,n)-\mathcal{A}(y,n)\| \leq \xi^n\]
then $d_{\Gr}(\mathcal{W}(x),\mathcal{W}(y)) \leq \tilde C \xi^n$.

For each $x \in X$ let $P(x)$ denote the projection with image $\mathcal{V}(x)$ and kernel $\mathcal{W}(x)$. Then $P(x)$ depends continuously on $x$, and in particular there exists $K>0$ such that
\[\|P(x)-P(y)\| \leq K\big[d_{\Gr}(\mathcal{V}(x),\mathcal{V}(y))+d_{\Gr}(\mathcal{W}(x),\mathcal{W}(y))\big]\]
for all $x,y \in X$.
\end{theorem}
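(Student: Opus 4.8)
The plan is to read the splitting off from the multiplicative ergodic (Oseledets) theorem applied to the $T$-invariant measures, and then to \emph{uniformise} the almost-everywhere picture using the minimality of $T$. Everything is organised around a single integer. For $0\le k\le d$ let $\mathcal{A}^{\wedge k}$ be the cocycle $x\mapsto\wedge^{k}\mathcal{A}(x)$ on $\wedge^{k}\CC^{d}$; the map $n\mapsto\log\sup_{x}\|\mathcal{A}^{\wedge k}(x,n)\|$ is subadditive, so $\beta_{k}:=\lim_{n}\tfrac1n\log\sup_{x}\|\mathcal{A}^{\wedge k}(x,n)\|$ exists, is $\le 0$ by the hypothesis $\|\mathcal{A}(x,n)\|\le M$, and equals $0$ for $k=0$. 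Set $p:=\max\{k:\beta_{k}=0\}$. Since the top Lyapunov exponent of $\mathcal{A}^{\wedge k}$ with respect to an ergodic $\mu$ is $\lambda_{1}(\mu)+\cdots+\lambda_{k}(\mu)$, the variational principle for the top exponent of a subadditive cocycle identifies $p$ with the largest number of non-negative — hence, as all exponents are $\le 0$, of \emph{zero} — Oseledets exponents of $\mathcal{A}$ that occur over all invariant measures. I shall also use that $\sup_{x,n}\|\mathcal{A}(x,n)\|<\infty$ makes the formula $\vvv v\vvv_{x}:=\sup_{n\ge 0}\|\mathcal{A}(x,n)v\|$ define a Borel family of norms, comparable to $\|\cdot\|$, with $\vvv\mathcal{A}(x)v\vvv_{Tx}\le\vvv v\vvv_{x}$; consequently $n\mapsto\vvv\mathcal{A}(x,n)v\vvv_{T^{n}x}$ is non-increasing, and $\vvv\mathcal{A}(x,n)v\vvv_{T^{n}x}=\sup_{i\ge n}\|\mathcal{A}(x,i)v\|$.

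Dispose first of the extremes. If $p=0$ then $\beta_{1}<0$, so $\|\mathcal{A}(x,n)\|\le C\xi^{n}$ uniformly and one takes $\mathcal{W}\equiv\CC^{d}$, $\mathcal{V}\equiv\{0\}$. If $p=d$ one shows $\mathcal{A}(x,1)$ is uniformly invertible — otherwise a single singular value forces, via minimality and uniform recurrence, $\|\mathcal{A}^{\wedge d}(x,n)\|$ to decay uniformly, contradicting $\beta_{d}=0$ — and takes $\mathcal{V}\equiv\CC^{d}$. So assume $1\le p\le d-1$. Put $\mathcal{W}(x):=\{v:\|\mathcal{A}(x,n)v\|\to 0\}$, a closed forward-invariant linear subspace. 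The core claims are $\dim\mathcal{W}(x)\equiv d-p$ and continuity of $x\mapsto\mathcal{W}(x)$. For a fixed ergodic $\mu$ let $\CC^{d}=E^{0}_{\mu}(x)\oplus E^{-}_{\mu}(x)$ be the Oseledets splitting into zero and negative exponents; then $E^{-}_{\mu}(x)\subseteq\mathcal{W}(x)$ a.e., and the reverse inclusion follows by applying Atkinson's recurrence theorem to the Birkhoff sums $\log|\det(\mathcal{A}(x,n)|_{E^{0}_{\mu}(x)})|$, which have zero integral and hence return near $0$ infinitely often a.e., so that no nonzero vector of $E^{0}_{\mu}(x)$ is contracted to $0$. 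Thus $\mathcal{W}$ coincides a.e.\ with $E^{-}_{\mu}$; choosing $\mu$ with $p$ zero exponents makes $\dim\mathcal{W}=d-p$ there, and a minimality argument — $\mathcal{W}(x)$, for arbitrary $x$, is trapped along the dense forward orbit of $x$ between limits of $\mathcal{W}$ at generic points — propagates the constant dimension and the continuity to every $x$. In particular every invariant measure has exactly $p$ zero exponents, so $\mathcal{A}|_{\mathcal{W}}$ has strictly negative top exponent for every ergodic measure; by the variational principle $\lim_{n}\tfrac1n\log\sup_{x}\|\mathcal{A}(x,n)|_{\mathcal{W}(x)}\|<0$, which is exactly $\|\mathcal{A}(x,n)w\|\le C\xi^{n}\|w\|$ on $\mathcal{W}$.

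For $\mathcal{V}$ I take the \emph{infinitely divisible} subspace $\mathcal{V}(x):=\bigcap_{k\ge 0}\mathcal{A}(T^{-k}x,k)\CC^{d}$, a decreasing intersection, which is forward-invariant. A compactness-and-minimality argument, using non-expansiveness of $\vvv\cdot\vvv$, shows $\dim\mathcal{V}(x)=p$ and $\mathcal{V}(x)\oplus\mathcal{W}(x)=\CC^{d}$; transversality is immediate since a vector in the intersection would be contracted to $0$ while lying in every image $\mathcal{A}(T^{-k}x,k)\CC^{d}$, which is impossible. The lower bound $\|\mathcal{A}(x,n)v\|\ge\delta\|v\|$ on $\mathcal{V}$ is where the full strength of the hypothesis enters: $\mathcal{A}|_{\mathcal{V}}$ has no nonzero vector tending to $0$, so all its Lyapunov exponents vanish for every invariant measure; the volume-scaling factor $\psi(x)$ of $\mathcal{A}(x,1)\colon(\mathcal{V}(x),\vvv\cdot\vvv_{x})\to(\mathcal{V}(Tx),\vvv\cdot\vvv_{Tx})$ satisfies $\psi\le 1$ (a non-expanding map sends the unit ball into the unit ball) and $\int\log\psi\,d\mu=\sum_{i}\lambda_{i}(\mu)=0$ for every $\mu$, so $\psi=1$ a.e.; a non-expanding volume-preserving linear isomorphism is an isometry, so $\vvv\mathcal{A}(x,n)v\vvv_{T^{n}x}=\vvv v\vvv_{x}$ on $\mathcal{V}$ for a.e.\ $x$, hence (the left-hand side being continuous in $x$, and every nonempty open set having positive measure for some invariant measure) for \emph{every} $x$; comparing $\vvv\cdot\vvv$ with $\|\cdot\|$ gives $\delta$. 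With the lower bound available, $\sigma_{p}(\mathcal{A}(x,n))\ge\delta$ while $\sigma_{p+1}(\mathcal{A}(x,n))\le\|\mathcal{A}^{\wedge(p+1)}(x,n)\|/\delta^{p}\le C\xi^{n}$, so the dominant and stable right singular subspaces of $\mathcal{A}(x,n)$ converge geometrically to $\mathcal{V}(x)$ and $\mathcal{W}(x)$; a Davis--Kahan ($\sin\Theta$) perturbation estimate then converts closeness of $\mathcal{A}(x,\cdot)$ and $\mathcal{A}(y,\cdot)$ at the stated times into the stated moduli of continuity. Finally, the projection $P(x)$ with image $\mathcal{V}(x)$ and kernel $\mathcal{W}(x)$ is a smooth function of the transverse pair $(\mathcal{V}(x),\mathcal{W}(x))$ on the open set of transverse pairs; since $X$ is compact and the pair is transverse everywhere, the derivative is uniformly bounded, giving the Lipschitz estimate with one constant $K$ and the continuity of $P$.

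The step I expect to be the main obstacle is the uniformisation itself: passing from the almost-everywhere, possibly sub-exponential Oseledets data for individual measures to an everywhere-defined, continuous, \emph{genuinely} dominated splitting, and above all securing the uniform lower bound on $\mathcal{V}$. This is precisely where one must exploit that \emph{all} forward products — not merely that $\lim\|\mathcal{A}(x,n)\|^{1/n}\le 1$ — are bounded: this is what produces the extremal norms in which $\mathcal{A}$ is non-expanding, and, combined with the uniform recurrence of orbits of the minimal map $T$ and the variational principle for subadditive cocycles, it is what upgrades \emph{zero exponent for every measure} to \emph{isometric on $\mathcal{V}$} and \emph{negative exponent for every measure} to \emph{uniform exponential contraction on $\mathcal{W}$}. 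The non-invertibility of the matrices $\mathcal{A}(x)$ is a persistent technical nuisance and is the reason $\mathcal{V}$ is constructed as an intersection of images rather than by iterating a map backwards.
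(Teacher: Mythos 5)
Your proposal captures the right skeleton --- identifying the integer $p$ via the subadditive quantities $\beta_k$ built out of products of singular values (equivalently, exterior powers), showing every invariant measure has exactly $p$ zero Lyapunov exponents, and exploiting an extremal-norm-type structure coming from $\sup_{x,n}\|\mathcal{A}(x,n)\|<\infty$. But the hardest step --- upgrading a.e.\ statements about invariant measures to everywhere, with uniform constants and continuity --- is precisely the step you identify as the ``main obstacle,'' and your proposal does not actually overcome it. The paper's entire mechanism for this, Proposition \ref{P1} applied to $f^\ell_n(x)=\sum_{i=1}^\ell\log\sigma_i(\mathcal{A}(x,n))$, yields the uniform two-sided bound $-\ell\log M\le f^\ell_n(x)\le\ell\log M$ for all $x,n$ and $\ell\le p$, from which the uniform lower bound on $\sigma_p$ and the uniform exponential decay of $\sigma_{p+1}$ follow. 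Everything else (the Cauchy-sequence constructions of $\mathcal{V}$ and $\mathcal{W}$ via $\mathfrak{V}(x,n,\kappa)$, $\mathfrak{W}(x,n,K)$, the explicit moduli of continuity, and the boundedness of $P$) hangs off these uniform singular-value estimates. Your outline reverses this: you try to read $\dim\mathcal{W}(x)=d-p$, continuity of $\mathcal{W}$, and then the uniform decay off the a.e.\ picture by a ``minimality argument'' that is never spelt out and, as written, is circular --- ``trapped along the dense orbit between limits of $\mathcal{W}$ at generic points'' presupposes a continuity of $\mathcal{W}$ that is the thing to be proved.

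Several specific steps would fail as written. First, you invoke ``the Oseledets splitting $\CC^d=E^0_\mu(x)\oplus E^-_\mu(x)$,'' but the paper deliberately allows non-invertible $\mathcal{A}(x)$, and the one-sided Oseledets theorem in that setting produces only the invariant \emph{flag} (here $E^-_\mu(x)$); the complementary invariant space $E^0_\mu(x)$ is exactly what the theorem does not hand you (this is the point of \cite{FLQ}, which the paper cites for comparison). Second, the claimed transversality of $\mathcal{V}(x)=\bigcap_k\mathcal{A}(T^{-k}x,k)\CC^d$ and $\mathcal{W}(x)$ is not ``immediate'': a vector $v$ can lie in every image $\mathcal{A}(T^{-k}x,k)\CC^d$ with preimages $w_k$ of unbounded length, while still having $\|\mathcal{A}(x,n)v\|\to 0$; ruling this out uses precisely the uniform bound $\sigma_p\ge\delta_0$ that you have not yet established. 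Third, the upgrade from ``$\psi=1$ $\mu$-a.e.'' to ``for every $x$'' via ``the left-hand side being continuous in $x$'' does not work: $\vvv v\vvv_x=\sup_{n\ge0}\|\mathcal{A}(x,n)v\|$ is only lower semicontinuous in $x$ absent the uniform estimates, and a supremum of continuous functions need not be continuous. (The Atkinson-theorem argument showing $E^-_\mu(x)=\mathcal{W}(x)$ a.e.\ is fine, given the uniform bound on $\|\mathcal{A}(x,n)\|$, but it only produces measure-theoretic information.) In short, you have correctly located the pressure point but left it unresolved; the paper's Proposition \ref{P1} is the missing ingredient, and once it is in hand the constructive route via singular subspaces is cleaner than passing through Oseledets theory.
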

While Theorem \ref{onepointone} has a number of features in common with the classical multiplicative ergodic theorem of V. I. Oseledec (see e.g. \cite{Krengel}) our proof is direct and does not make use of any prior multiplicative ergodic theorems. Indeed, since in general we wish to work with non-invertible matrices, the standard statement of Oseledec's theorem does not give the existence even of a measurable splitting of the type given above, giving only an invariant flag (though see \cite{FLQ}). The proof of Theorem \ref{onepointone} does however incorporate ideas used in the proofs of Oseledec's theorem given by M. S. Raghunathan \cite{Rag} and D. Ruelle \cite{Ruelle}.

Note that if $p=0$ then the conclusions of the theorem are somewhat vacuous, and in applications further analysis is needed to show that this situation does not arise.

In order to apply this theorem in the desired context we require some further definitions. We shall say that $\mathsf{A} \subset \Mat_d(\mathbb{C})$ is \emph{product bounded} if there exists $M>0$ such that for every $n \in \mathbb{N}$ we have $\|A_{n}\cdots A_{1}\| \leq M$ for every finite sequence $(A_{n},\ldots,A_{1})\in \mathsf{A}^n$. Note if such a uniform bound holds for $\mathsf{A}$ with respect to some norm on $\mathbb{C}^d$ then it holds with respect to all such norms, subject to variation in the constant $M$. We shall say that a norm $\vvv\cdot\vvv$ on $\mathbb{C}^d$ is an \emph{extremal norm} for $\mathsf{A}$ if $\vvv A\vvv \leq \varrho(\mathsf{A})$ for all $A \in \mathsf{A}$. If $\varrho(\mathsf{A})>0$ then an extremal norm exists for $\mathsf{A}$ if and only if  $\varrho(\mathsf{A})^{-1}\mathsf{A}$ is product bounded \cite{Koz,RS}.

Given a compact set $\mathsf{A} \subset \Mat_d(\mathbb{C})$, let us define a metric on $\mathsf{A}^{\mathbb{Z}}$ by
\[d\left[(A_i)_{i \in \mathbb{Z}},(B_i)_{i \in \mathbb{Z}}\right]:= \sum_{i \in \mathbb{Z}} \frac{\|A_i-B_i\|}{2^{|i|}}.\]
If $\mathsf{A}$ is compact then $(\mathsf{A}^{\mathbb{Z}},d)$ is compact. We define the \emph{shift map} $T \colon \mathsf{A}^{\mathbb{Z}} \to \mathsf{A}^{\mathbb{Z}}$ by $T[(A_i)_{i \in \mathbb{Z}}]= (A_{i+1})_{i \in \mathbb{Z}}$. The shift map is a Lipschitz homeomorphism of $\mathsf{A}^{\mathbb{Z}}$. Let $\mathcal{A} \colon \mathsf{A}^{\mathbb{Z}} \to \Mat_d(\mathbb{C})$ be given by $\mathcal{A}[(A_i)_{i \in \mathbb{Z}}]=A_1$, and let $\mathcal{A}(x,n) = \mathcal{A}(T^{n-1}x)\cdots \mathcal{A}(x)$ for all $(x,n) \in \mathsf{A}^{\mathbb{Z}} \times \mathbb{N}$ so that $\mathcal{A} \colon \mathsf{A}^{\mathbb{Z}} \times \mathbb{N} \to \Mat_d(\mathbb{C})$ is a continuous cocycle. For each $n \in \mathbb{N}$ we have
\[\varrho^+_n(\mathsf{A},\|\cdot\|) = \sup\left\{\|\mathcal{A}(x,n)\|^{1/n} \colon x \in \mathsf{A}\right\}\]
and
\[\varrho^-_n(\mathsf{A}) = \sup\left\{\rho(\mathcal{A}(x,n))^{1/n} \colon x \in \mathsf{A}\right\}.\]
As a consequence we deduce
\[\log \varrho(\mathsf{A}) = \lim_{n \to \infty} \sup_{x \in \mathsf{A}^{\mathbb{Z}}} \frac{1}{n}\log \|\mathcal{A}(x,n)\|,\]
a formulation which is particularly amenable to study using ergodic theory via Theorem \ref{SUSAET} below.

Combining Theorem \ref{onepointone} with some supplementary results given in section 3 below, we obtain the following:
 \begin{theorem}\label{Tech}
Let $\mathsf{A} \subset \Mat_d(\mathbb{C})$ be a compact set such that $\varrho(\mathsf{A})=1$, and suppose that $\mathsf{A}$ is product bounded. Let $\vvv \cdot \vvv$ be any extremal norm for $\mathsf{A}$ and define
\[Y := \left\{ x\in \mathsf{A}^{\mathbb{Z}} \colon \vvv \mathcal{A}(x,n)\vvv =1\text{ }\forall\text{ }n \in \mathbb{N}\right\}.\]
Then the set $Y$ is a compact, nonempty subset of $\mathsf{A}^{\mathbb{Z}}$ such that $TY \subseteq Y$.

Let $Z\subseteq Y$ be any invariant subset such that $T \colon Z \to Z$ is minimal. Then there exists an integer $1\leq p \leq d$ such that the following properties hold. There exist H\"older continuous invariant functions $\mathcal{V} \colon Z \to \Gr(p,d)$, $\mathcal{W} \colon Z \to \Gr(d-p,d)$ such that $\mathcal{V}(x)\oplus \mathcal{W}(x) =\mathbb{C}^d$ for each $x\in Z$. There exist constants $C>0$, $\xi \in (0,1)$ such that for all $x \in Z$ and $n \in \mathbb{N}$, $\vvv \mathcal{A}(x,n)v \vvv = \vvv v \vvv$ for all $v \in \mathcal{V}(x)$ and $\vvv \mathcal{A}(x,n) w\vvv \leq C\xi^n \vvv w \vvv$ for all $w \in \mathcal{W}(x)$. If for each $x \in Z$ we let $P(x)$ denote the projection with image $\mathcal{V}(x)$ and kernel $\mathcal{W}(x)$ then $P \colon Z \to \Mat_d(\mathbb{C})$ is H\"older continuous.
\end{theorem}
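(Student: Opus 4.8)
The plan is to derive Theorem~\ref{Tech} from Theorem~\ref{onepointone} together with elementary facts about extremal norms and minimal systems. \emph{Structure of $Y$.} Since $\vvv\cdot\vvv$ is extremal the cocycle never expands the $\vvv\cdot\vvv$-norm, so $\vvv\mathcal{A}(x,n)\vvv\le 1$ for all $x,n$, hence $a_n:=\sup_x\vvv\mathcal{A}(x,n)\vvv\le 1$; but $(a_n)$ is submultiplicative with $\inf_n a_n^{1/n}=\varrho(\mathsf{A})=1$, which forces $a_n=1$ for every $n$, each supremum being attained by compactness of $\mathsf{A}^{\mathbb{Z}}$. To produce a point of $Y$ I would take, for each $n$, a point $x^{(n)}$ with $\vvv\mathcal{A}(x^{(n)},2n)\vvv=1$; because in a non-expanding cocycle every factor of a norm-one product has norm one, $\vvv\mathcal{A}(T^jx^{(n)},k)\vvv=1$ whenever $0\le j$ and $j+k\le 2n$, so after recentring via $y^{(n)}:=T^nx^{(n)}$ and passing to a convergent subsequence $y^{(n)}\to y$, continuity gives $\vvv\mathcal{A}(y,k)\vvv=1$ for every $k$, i.e.\ $y\in Y$. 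Compactness of $Y$ is clear since $Y=\bigcap_n\{x:\vvv\mathcal{A}(x,n)\vvv=1\}$ is closed, and $TY\subseteq Y$ follows from $1=\vvv\mathcal{A}(x,n+1)\vvv\le\vvv\mathcal{A}(Tx,n)\vvv\,\vvv\mathcal{A}(x)\vvv\le\vvv\mathcal{A}(Tx,n)\vvv\le 1$ for $x\in Y$.

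\emph{Applying Theorem~\ref{onepointone}.} Given $Z\subseteq Y$ with $T|_Z$ minimal, I would first note that $TZ$ is a nonempty closed $T$-invariant subset of the minimal set $Z$, hence $TZ=Z$, so $T|_Z$ is a continuous bijection of a compact metric space and therefore a minimal homeomorphism; meanwhile $\mathcal{A}|_Z$ is continuous and, product boundedness being norm-independent, satisfies a uniform bound $\|\mathcal{A}(x,n)\|\le M$ on $Z$. Theorem~\ref{onepointone} then supplies the integer $p$, the continuous forward-invariant splitting $\mathcal{V}\oplus\mathcal{W}$, the projection $P$, the constants $C,\delta>0$, $\xi\in(0,1)$, and the moduli of continuity. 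That $p\ge 1$ follows because $p=0$ would give $\|\mathcal{A}(x,n)\|\le C\xi^n\to 0$, contradicting $\vvv\mathcal{A}(x,n)\vvv=1$ together with equivalence of norms. The $\vvv\cdot\vvv$-contraction estimate $\vvv\mathcal{A}(x,n)w\vvv\le C'\xi^n\vvv w\vvv$ on $\mathcal{W}$ is then immediate from the Euclidean one by norm equivalence.

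\emph{Isometry on $\mathcal{V}$.} The heart of the proof is upgrading the Euclidean lower bound on $\mathcal{V}$ to the exact equality $\vvv\mathcal{A}(x,n)v\vvv=\vvv v\vvv$. Writing $L_x:=\mathcal{A}(x)|_{\mathcal{V}(x)}\colon\mathcal{V}(x)\to\mathcal{V}(Tx)$, forward-invariance makes this well defined, the bound $\|\mathcal{A}(x,n)v\|\ge\delta\|v\|$ makes it injective hence bijective, and extremality gives $\vvv L_xv\vvv\le\vvv v\vvv$. I would set $B_y:=\{v\in\mathcal{V}(y):\vvv v\vvv\le 1\}$, a convex body depending continuously on $y$, and consider $\gamma(x):=\mathrm{vol}(L_xB_x)/\mathrm{vol}(B_{Tx})\le 1$ (Euclidean $p$-dimensional volume), which is continuous and positive and satisfies $\prod_{i=0}^{n-1}\gamma(T^ix)=\Delta(x,n)\,\mathrm{vol}(B_x)/\mathrm{vol}(B_{T^nx})$, where $\Delta(x,n)$ is the Euclidean volume distortion of $\mathcal{A}(x,n)|_{\mathcal{V}(x)}$. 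Since the singular values of that restriction lie in $[\delta,M]$ we have $\Delta(x,n)\in[\delta^p,M^p]$, and $\mathrm{vol}(B_y)$ is bounded above and below on the compact set $Z$, so these products are bounded away from $0$ uniformly in $x$ and $n$. If $\gamma(x_0)<1$ for some $x_0$, then $\gamma<1-\varepsilon$ on a neighbourhood $U$ of $x_0$; in a minimal system the set of return times $\{i\ge 0:T^ix\in U\}$ is syndetic with a gap bound independent of $x$, so $\prod_{i=0}^{n-1}\gamma(T^ix)\to 0$, a contradiction. Hence $\gamma\equiv 1$, so $L_xB_x$ is a convex body contained in $B_{Tx}$ of equal volume, therefore equal to it; thus $L_x$ maps the $\vvv\cdot\vvv$-unit sphere of $\mathcal{V}(x)$ onto that of $\mathcal{V}(Tx)$, i.e.\ $L_x$ is a $\vvv\cdot\vvv$-isometry, and iterating gives $\vvv\mathcal{A}(x,n)v\vvv=\vvv v\vvv$ for $v\in\mathcal{V}(x)$. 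I expect this step to be the main obstacle: it is where the multiplicativity of the volume distortion must be combined with the syndeticity of return times in a minimal system to pass from a soft metric bound to a rigid isometry.

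\emph{H\"older continuity.} Finally I would record the telescoping estimate $\|\mathcal{A}(x,n)-\mathcal{A}(y,n)\|\le K_1^{\,n}\,d(x,y)$ on $\mathsf{A}^{\mathbb{Z}}$ (valid since $\mathcal{A}(x,n)$ depends only on the coordinates $1,\dots,n$ and $\mathsf{A}$ is bounded), choose $n$ of order $\log(1/d(x,y))$ so that the hypotheses of the two modulus-of-continuity clauses of Theorem~\ref{onepointone} are met, and read off $d_{\Gr}(\mathcal{V}(x),\mathcal{V}(y))\le C_2\,d(x,y)^{\alpha}$ and the analogous bound for $\mathcal{W}$ with positive H\"older exponents, the case of $d(x,y)$ not small being trivial after adjusting constants. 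The final inequality of Theorem~\ref{onepointone} relating $\|P(x)-P(y)\|$ to $d_{\Gr}(\mathcal{V}(x),\mathcal{V}(y))+d_{\Gr}(\mathcal{W}(x),\mathcal{W}(y))$ then transfers H\"older continuity to $P$.
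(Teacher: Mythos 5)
Your proof is correct, but your argument for the key step --- upgrading the Euclidean bound $\|\mathcal{A}(x,n)v\|\geq\delta\|v\|$ on $\mathcal{V}(x)$ to the exact equality $\vvv\mathcal{A}(x,n)v\vvv=\vvv v\vvv$ --- is genuinely different from the paper's. The paper associates to each $x\in Z$ the set $\mathcal{S}(x)$ of all matrices $B$ with $\liminf_n\max\bigl[d(T^nx,x),\vvv\mathcal{A}(x,n)-B\vvv\bigr]=0$, shows this is a compact (nonempty, by recurrence) \emph{semigroup} of norm-one matrices, and then invokes the fact that every compact semigroup contains an idempotent; this idempotent is identified as the projection $P(x)$, and the identity $P(x)v=v$ for $v\in\mathcal{V}(x)$ immediately rules out any strict decrease $\vvv\mathcal{A}(x,k)v\vvv<\vvv v\vvv$. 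You instead track the $p$-dimensional volume distortion $\gamma(x)$ of $\mathcal{A}(x)|_{\mathcal{V}(x)}$ measured against the $\vvv\cdot\vvv$-unit balls, observe that the telescoping product $\prod_{i=0}^{n-1}\gamma(T^ix)$ is bounded below uniformly in $n$ (since both the Euclidean determinant factor and the ratio of unit-ball volumes are controlled), and then use syndeticity of return times in a minimal system to force $\gamma\equiv1$; the rigidity ``convex body contained in another of equal volume'' then yields the isometry. Both arguments are valid. The paper's semigroup route is softer and more portable (it needs only that the cocycle values lie in a bounded subsemigroup of $\Mat_d(\CC)$, and it yields the bonus, recorded in the paper's remark, that $\mathcal{S}(x)$ has a unique idempotent, namely $P(x)$); your volume route is more hands-on and geometric, avoiding the idempotent-in-a-compact-semigroup fact at the cost of bringing in convexity and a quantitative minimality estimate. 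The remaining parts of your argument --- establishing $Y\neq\emptyset$ directly by recentring instead of via Lemma~\ref{subordinationsprinzip}, ruling out $p=0$, and the H\"older estimate from the moduli of continuity in Theorem~\ref{onepointone} with $n\asymp\log(1/d(x,y))$ --- match the paper's proof in substance.
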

To obtain Theorem \ref{QBWF} we combine this result with an estimate due to X. Bressaud and A. Quas on the approximation via periodic orbits of closed invariant subsets of shift transformations over finite alphabets (cf. \cite{BQ}).

The remainder of this article is structured as follows. In section \ref{three} we establish some results in subadditive ergodic theory which are needed in the proofs of Theorems \ref{onepointone} and \ref{Tech}. In sections \ref{four} and \ref{five} we prove these two theorems, and in section \ref{six} we give the proof of Theorem \ref{QBWF}. Finally, in section \ref{seven} we describe the obstructions to improving the error term in Theorem \ref{QBWF} and to extending that theorem to the case of infinite compact sets $\mathsf{A}$.

\section{Subadditive ergodic optimisation}\label{three}

The recently-developed topic of \emph{ergodic optimisation} is concerned with the following problem. Given a continuous dynamical system $T \colon X \to X$ defined on a compact metric space, and some continuous (or only upper semi-continuous) function $f \colon X \to \mathbb{R}$, one studies the greatest possible linear growth rate of the sequence $\sum_{j=0}^{n-1}f(T^jx)$ as $x$ varies over $X$, which is equal to the supremum of all possible values of the integral of $f$ with respect to a $T$-invariant probability measure on $X$. Problems which are considered include the identification and approximation of those invariant measures which attain this supremum. Some recent research articles in this area include \cite{B1,BJ,Bremont,BQ,CLT,CG,J,YH}. In this section we generalise some (mostly standard) results from ergodic optimisation to the context of subadditive ergodic theory, with the aim of applying these results to the proof of Theorem \ref{onepointone}. For parallels of these results in the additive case we direct the reader to \cite{J}.
 
Throughout this section we assume that $X$ is a compact metric space and $T \colon X \to X$ a continuous transformation. We let $\mathcal{M}$ denote the set of all Borel probability measures on $X$ and let $\mathcal{M}_T$ denote the subset consisting of all $T$-invariant Borel probability measures. We equip $\mathcal{M}$ and $\mathcal{M}_T$ with the weak-* topology, under which both sets are compact and metrisable \cite{W}. 

We in fact only require the results established below in the case where $f \colon X \to \mathbb{R} \cup \{-\infty\}$ is continuous, but the case in which $f$ is only taken to be upper semi-continuous is included also since this does not require any modification to the proofs. The following simple result is important enough to be worth stating explicitly:
\begin{lemma}\label{A1}
Let $f \colon X \to \mathbb{R} \cup \{-\infty\}$ be upper semi-continuous. Then the map from $\mathcal{M}$ to $\mathbb{R} \cup \{-\infty\}$ defined by $\mu \mapsto \int f\,d\mu$ is upper semi-continuous.
\end{lemma}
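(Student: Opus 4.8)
The plan is to realise $f$ as a pointwise non-increasing limit of continuous functions and then exploit the fact that an infimum of a family of continuous functionals on $\mathcal{M}$ is automatically upper semi-continuous. First I would dispose of the degenerate case $f \equiv -\infty$, in which the statement is trivial. Otherwise, writing $\mathrm{dist}(\cdot,\cdot)$ for a metric on the compact space $X$, I would introduce the Lipschitz regularisations
\[f_k(x) := \sup_{y \in X}\big(f(y) - k\,\mathrm{dist}(x,y)\big), \qquad k \in \mathbb{N}.\]
Since $f$ is upper semi-continuous on the compact space $X$ it is bounded above, and since $f$ is not identically $-\infty$ and $X$ has finite diameter, each $f_k$ takes finite values; moreover each $f_k$ is $k$-Lipschitz, hence continuous. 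Taking $y=x$ in the supremum shows $f_k \geq f$, the sequence $(f_k)$ is clearly non-increasing in $k$, and a routine argument using the upper semi-continuity of $f$ (splitting the supremum according to whether $\mathrm{dist}(x,y)$ is small or not) shows that $f_k(x) \downarrow f(x)$ for every $x \in X$.

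Next, for each fixed $k$ the map $\mu \mapsto \int f_k\,d\mu$ is continuous on $\mathcal{M}$ in the weak-* topology, directly from the definition of that topology, because $f_k$ is bounded and continuous. Applying the monotone convergence theorem to the decreasing sequence $(f_k)$, which is uniformly bounded above by $\sup_X f < \infty$, yields $\int f_k\,d\mu \downarrow \int f\,d\mu$ for every $\mu \in \mathcal{M}$, the limiting value possibly being $-\infty$. Consequently
\[\mu \mapsto \int f\,d\mu = \inf_{k \in \mathbb{N}} \int f_k\,d\mu\]
is a pointwise infimum of continuous real-valued functions, and is therefore upper semi-continuous, which is the desired conclusion.

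The only point that needs any attention is the bookkeeping around the value $-\infty$: one must check that the regularisations $f_k$ are genuine finite-valued continuous functions and that the monotone convergence step remains valid when $\int f\,d\mu = -\infty$. Neither of these is a real obstacle. An alternative to writing out the Lipschitz regularisation would be to quote the general fact that every upper semi-continuous function on a metric space is the pointwise infimum of a countable family of continuous functions, but giving the formula for $f_k$ explicitly keeps the proof self-contained.
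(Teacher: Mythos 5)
Your proof is correct and follows essentially the same route as the paper: realise $f$ as a decreasing pointwise limit of continuous real-valued functions, invoke the monotone convergence theorem, and conclude that an infimum of continuous (hence weak-* continuous) linear functionals is upper semi-continuous. The only difference is cosmetic: where the paper simply cites Bourbaki for the existence of such a decreasing sequence, you supply the explicit Lipschitz regularisations $f_k(x) = \sup_y (f(y) - k\,\mathrm{dist}(x,y))$, making the argument self-contained.
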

\begin{proof}
Recall that a function from a metrisable space to $\mathbb{R} \cup \{-\infty\}$ is upper semi-continuous if and only if it is equal to the pointwise limit of a decreasing sequence of continuous functions taking values in $\mathbb{R}$ (see e.g. \cite[ch. IX]{Bourbaki}). Let $(f_i)_{i=1}^\infty$ be such a sequence converging pointwise to $f$. For each $i$ the map $\mu \mapsto \int f_i\,d\mu$ is clearly real-valued and is by definition weak-* continuous, and for each $\mu$ the sequence $(\int f_i\,d\mu)_{i=1}^\infty$ decreases to $\int f\,d\mu$ by the Monotone Convergence Theorem.\end{proof}
Recall that a sequence $(a_n)_{n=1}^\infty$ such that $a_n \in \mathbb{R}\cup\{-\infty\}$ for each $n$ is said to be \emph{subadditive} if $a_{n+m} \leq a_n + a_m$ for all $n, m \in \mathbb{N}$. If this is the case then
\[\lim_{n \to \infty}\frac{a_n}{n} = \inf_{n \geq 1} \frac{a_n}{n} \in \mathbb{R}\cup\{-\infty\}.\]
\begin{definition}
We say that a sequence $(f_n)_{n=1}^\infty$ of functions from $X$ to $\mathbb{R} \cup \{-\infty\}$ is \emph{subadditive} if $f_{n+m}(x) \leq f_n(T^mx)+f_m(x)$ for all $n,m \in \mathbb{N}$ and all $x \in X$. 
\end{definition}
If $\mu \in \mathcal{M}_T$ and $(f_n)_{n=1}^\infty$ is a subadditive sequence of upper semi-continuous functions then the sequence $(\int f_n\,d\mu)_{n=1}^\infty$ is easily seen to be subadditive. If in addition $\mu \in \mathcal{M}_T$ is ergodic, then the Subadditive Ergodic Theorem asserts that for $\mu$-a.e. $x \in X$
\[\lim_{n \to \infty} \frac{1}{n}f_n(x) = \lim_{n \to \infty} \frac{1}{n} \int f_n\,d\mu = \inf_{n \geq 1} \frac{1}{n}\int f_n\,d\mu,\]
see e.g. \cite{Krengel}. This motivates the following definition.
 \begin{definition}
Let $(f_n)$ be a subadditive sequence of upper semi-continuous functions from $X$ to $\mathbb{R} \cup \{-\infty\}$. The \emph{maximum ergodic average} of $(f_n)$ is defined to be the quantity
 \[\beta[(f_n)]:=\sup_{\mu \in \mathcal{M}_T} \lim_{n \to \infty} \frac{1}{n} \int f_n\,d\mu = \sup_{\mu \in \mathcal{M}_T} \inf_{n \geq 1} \frac{1}{n} \int f_n\,d\mu.\]
 We define $\mathcal{M}_{\max}[(f_n)]$ to be the set of all $\mu \in \mathcal{M}_T$ for which this supremum is attained.
 \end{definition}
 The following important result, called the \emph{semi-uniform subadditive ergodic theorem} in \cite{StSt}, is due independently to S. J. Schreiber \cite{Sch} and to R. Sturman and J. Stark \cite{StSt}. Since the version which we use is somewhat more general than those given by Schreiber and Sturman-Stark, we include a proof in the appendix. 
 \begin{theorem}[Semi-uniform subadditive ergodic theorem]\label{SUSAET}
 Let $(f_n)$ be a subadditive sequence of upper semicontinuous functions from $X$ to $\mathbb{R} \cup \{-\infty\}$. Then
 \[\beta[(f_n)]=\lim_{n \to \infty}\sup_{x \in X}\frac{1}{n}f_n(x) = \sup_{x \in X} \limsup_{n \to \infty} \frac{1}{n} f_n(x)= \lim_{n \to \infty} \sup_{\mu \in \mathcal{M}_T} \frac{1}{n}\int f_n\,d\mu.\]
 \end{theorem}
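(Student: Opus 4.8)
The plan is to prove the four-way equality by a chain of inequalities: two ``easy'' directions that are immediate from monotonicity, and two ``hard'' directions obtained from an empirical-measure argument fed by a telescoping estimate. Write $a_n:=\sup_{x\in X}f_n(x)$ and $b_n:=\sup_{\mu\in\mathcal{M}_T}\int f_n\,d\mu$. Since $f_{n+m}(x)\le f_n(T^mx)+f_m(x)$, one gets $a_{n+m}\le a_n+a_m$ and $b_{n+m}\le b_n+b_m$, so by Fekete's lemma $\lim_n a_n/n=\inf_n a_n/n$ and $\lim_n b_n/n=\inf_n b_n/n$ exist in $\mathbb{R}\cup\{-\infty\}$ (each $a_n$ is either $-\infty$ or finite, an upper semi-continuous function on a compact space being bounded above). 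The easy inequalities follow at once: $\int f_n\,d\mu\le a_n$ gives $b_n\le a_n$; for each fixed $\mu\in\mathcal{M}_T$ and each $n$, $\inf_k\frac1k\int f_k\,d\mu\le\frac1n\int f_n\,d\mu\le\frac1n b_n$, so $\lim_n\frac1n\int f_n\,d\mu\le\lim_n b_n/n$, whence $\beta[(f_n)]\le\lim_n b_n/n\le\lim_n a_n/n$; and $\frac1n f_n(x)\le\frac1n a_n$ gives $\sup_x\limsup_n\frac1n f_n(x)\le\lim_n a_n/n$. If $\lim_n a_n/n=-\infty$ all four quantities equal $-\infty$ and we are done; so assume from now on that $\beta_0:=\lim_n a_n/n>-\infty$, noting $a_n\ge n\beta_0$ for all $n$.

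The core is a telescoping bound. Fix $m\in\mathbb{N}$ and $K>0$, put $f_m^{(K)}:=\max(f_m,-K)$ (upper semi-continuous, bounded, and $\ge f_m$), and set $M_m:=\max\{0,\sup_X f_0,\dots,\sup_X f_{m-1}\}<\infty$ with the convention $f_0\equiv0$. For $x\in X$, $n\ge2m$, and a residue $l\in\{0,\dots,m-1\}$, write $n-l=q_lm+r_l$ with $0\le r_l<m$; iterating $f_{a+b}(x)\le f_a(T^bx)+f_b(x)$ yields
\[f_n(x)\ \le\ f_l(x)+f_{r_l}\!\big(T^{l+q_lm}x\big)+\sum_{i=0}^{q_l-1}f_m\!\big(T^{l+im}x\big).\]
Summing over $l$ and replacing $f_m$ by $f_m^{(K)}$: the indices $\{l+im:0\le l<m,\ 0\le i<q_l\}$ are pairwise distinct and cover all but at most $2m$ of $\{0,\dots,n-1\}$, so after bounding the boundary terms above by $M_m$ and the at most $2m$ missing terms below by $-K$ one obtains
\[m f_n(x)\ \le\ 2mM_m+2mK+\sum_{j=0}^{n-1}f_m^{(K)}\!\big(T^jx\big).\]
Now for each $n$ pick $x_n$ with $f_n(x_n)>a_n-1\ge n\beta_0-1$ and form the \emph{empirical measure} $\mu_n:=\frac1n\sum_{j=0}^{n-1}\delta_{T^jx_n}\in\mathcal{M}$. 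Dividing by $mn$ gives $\beta_0\le\frac{2M_m+2K+1}{n}+\frac1m\int f_m^{(K)}\,d\mu_n$.

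Pass to a weak-$*$ limit point $\mu$ of $(\mu_n)$; the standard Krylov--Bogolyubov computation shows $\mu\in\mathcal{M}_T$. Taking the limit along the relevant subsequence and using upper semi-continuity of $\mu\mapsto\int f_m^{(K)}\,d\mu$ (Lemma \ref{A1}) gives $\beta_0\le\frac1m\int f_m^{(K)}\,d\mu$; letting $K\to\infty$ and using monotone convergence ($f_m^{(K)}\downarrow f_m$) gives $\beta_0\le\frac1m\int f_m\,d\mu$ for every $m$, hence $\beta_0\le\inf_m\frac1m\int f_m\,d\mu=\lim_m\frac1m\int f_m\,d\mu\le\beta[(f_n)]$. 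Combined with the easy chain this forces $\beta[(f_n)]=\lim_n b_n/n=\lim_n a_n/n=\beta_0$ (and exhibits $\mu\in\mathcal{M}_{\max}[(f_n)]$). It remains to prove $\beta_0\le\sup_x\limsup_n\frac1n f_n(x)$. Here I would apply Kingman's subadditive ergodic theorem on $(X,T,\mu)$, which is legitimate since $n\beta_0\le\int f_n\,d\mu\le a_n<\infty$ so $f_n\in L^1(\mu)$: there is a $T$-invariant $f^*\in L^1(\mu)$ with $\frac1n f_n\to f^*$ $\mu$-a.e.\ and $\int f^*\,d\mu=\beta_0$. The point is that $f^*\le\beta_0$ holds $\mu$-a.e.: otherwise, conditioning $\mu$ on the (essentially $T$-invariant) set $\{f^*>\beta_0\}$ yields $\nu\in\mathcal{M}_T$ with $\int f^*\,d\nu>\beta_0$, contradicting $\int f^*\,d\nu=\lim_n\frac1n\int f_n\,d\nu\le\beta[(f_n)]=\beta_0$. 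Hence $f^*=\beta_0$ $\mu$-a.e., so $\limsup_n\frac1n f_n(x)=\beta_0$ for $\mu$-a.e.\ $x$, which gives the last inequality and closes the chain.

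I expect the principal difficulty to be the telescoping estimate: carrying it out uniformly in $n$ while permitting the $f_n$ to take the value $-\infty$, which is exactly what the truncation $f_m^{(K)}$ together with the monotone-convergence limit $K\to\infty$ is designed to absorb; the passage from an optimal invariant measure back to a single orbit, via the (standard, but not wholly trivial) non-ergodic form of Kingman's theorem, is the other delicate point.
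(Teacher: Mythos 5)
Your proof is correct and reaches the same conclusion, but the route is genuinely different from the paper's in several places, so a comparison is worthwhile. Both proofs hinge on a telescoping estimate, but you feed it differently: the paper first proves two auxiliary results --- an Oxtoby/Herman-type lemma ($\lim_n\sup_x\frac1n\sum_{k<n}f(T^kx)=\sup_\mu\int f\,d\mu$ for a single u.s.c.\ $f$) and a compactness/open-cover lemma showing that $\lim_n\sup_\mu$ and $\sup_\mu\lim_n$ coincide for a subadditive sequence of u.s.c.\ functions on $\mathcal{M}_T$ --- and then telescopes against one fixed block $f_{n_1}$, invoking the first lemma to pass from Birkhoff sums of $f_{n_1}$ to invariant measures. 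You instead run the Krylov--Bogolyubov empirical-measure argument directly on near-maximising orbits $x_n$, which bypasses both auxiliary lemmas, and you handle the possible value $-\infty$ by truncating ($f_m^{(K)}=\max(f_m,-K)$) and passing $K\to\infty$ by monotone convergence, whereas in the paper the $-\infty$ case is absorbed into Lemma~\ref{A2} itself. For the orbit-wise equality $\beta_0\le\sup_x\limsup_n\frac1nf_n(x)$, the paper first uses the ergodic-decomposition step in Lemma~\ref{nonemptiness} to extract an \emph{ergodic} maximiser and then applies the ergodic Kingman theorem, while you apply the non-ergodic Kingman theorem to the (a priori non-ergodic) weak-$*$ limit $\mu$ you constructed, and rule out $\mu(\{f^*>\beta_0\})>0$ by conditioning on that essentially invariant set. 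The trade-off is roughly this: the paper's version factors into lemmas (A2, A3, \ref{nonemptiness}) that it wants anyway, and only ever invokes the ergodic form of Kingman plus ergodic decomposition; yours is more self-contained for the present theorem, constructs a maximising measure explicitly as a limit of empirical measures of near-optimal orbits, but leans on the non-ergodic form of Kingman together with the conditioning argument, which replaces (and effectively re-proves) the step done by Lemma~\ref{nonemptiness}. One point worth making explicit in your write-up: the weak-$*$ limit $\mu$ and the subsequence along which it is taken should be fixed once and for all (they depend only on the $x_n$, not on $m$ or $K$), so that the inequality $\beta_0\le\frac1m\int f_m^{(K)}\,d\mu$ holds for this single $\mu$ uniformly over $m$ and $K$; this is what you intend, but it is not hard to misread the order of quantifiers.
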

We next prove some results describing the structure of the set $\mathcal{M}_{\max}[(f_n)]$ for a subadditive sequence $(f_n)$.
 \begin{lemma}\label{nonemptiness}
 Let $(f_n)$ be a subadditive sequence of upper semi-continuous functions from $X$ to $\mathbb{R} \cup \{-\infty\}$. Then $\mathcal{M}_{\max}[(f_n)]$ is a compact subset of $\mathcal{M}_T$ and contains an ergodic measure.
 \end{lemma}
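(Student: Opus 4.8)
The plan is to realise $\mathcal{M}_{\max}[(f_n)]$ as the set on which a single affine, upper semi-continuous function attains its maximum over the compact convex set $\mathcal{M}_T$, and then to combine the Krein--Milman theorem with the classical identification of ergodic measures as the extreme points of $\mathcal{M}_T$. Write $g(\mu):=\lim_{n\to\infty}\frac1n\int f_n\,d\mu=\inf_{n\geq1}\frac1n\int f_n\,d\mu$, the equality holding for every $\mu\in\mathcal{M}_T$ because $(\int f_n\,d\mu)_{n=1}^\infty$ is subadditive; by definition $\mathcal{M}_{\max}[(f_n)]=\{\mu\in\mathcal{M}_T:g(\mu)=\beta[(f_n)]\}$.

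Next I would record the two relevant features of $g$. First, each map $\mu\mapsto\frac1n\int f_n\,d\mu$ is upper semi-continuous on $\mathcal{M}$ by Lemma \ref{A1}, and an infimum of upper semi-continuous functions is upper semi-continuous, so $g$ is upper semi-continuous on the compact metrisable space $\mathcal{M}_T$; consequently $g$ attains the value $\beta[(f_n)]$, so $\mathcal{M}_{\max}[(f_n)]$ is nonempty, and as the superlevel set $\{\mu\in\mathcal{M}_T:g(\mu)\geq\beta[(f_n)]\}$ of an upper semi-continuous function it is closed and hence compact. Second, each map $\mu\mapsto\frac1n\int f_n\,d\mu$ is affine, and $g$ is their pointwise limit, so $g$ is affine on $\mathcal{M}_T$; therefore $\mathcal{M}_{\max}[(f_n)]$, being the maximiser set of an affine function over a convex set, is convex. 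Being a nonempty compact convex set, $\mathcal{M}_{\max}[(f_n)]$ has an extreme point $\mu_0$ by the Krein--Milman theorem. To finish I would check that $\mu_0$ is in fact extreme in $\mathcal{M}_T$: if $\mu_0=\frac12(\mu_1+\mu_2)$ with $\mu_1,\mu_2\in\mathcal{M}_T$, then affinity of $g$ gives $\beta[(f_n)]=g(\mu_0)=\frac12 g(\mu_1)+\frac12 g(\mu_2)$, and since $g(\mu_i)\leq\beta[(f_n)]$ for $i=1,2$ this forces $g(\mu_1)=g(\mu_2)=\beta[(f_n)]$, i.e. $\mu_1,\mu_2\in\mathcal{M}_{\max}[(f_n)]$; extremality of $\mu_0$ in $\mathcal{M}_{\max}[(f_n)]$ then yields $\mu_1=\mu_2=\mu_0$. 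Thus $\mu_0$ is an extreme point of $\mathcal{M}_T$, hence ergodic (see \cite{W}).

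The semicontinuity and convexity assertions are routine; the one point meriting a little care is the affinity of $g$, which relies on the fact that for a subadditive real sequence the quantity $a_n/n$ actually converges in $\mathbb{R}\cup\{-\infty\}$, so that the affine functions $\mu\mapsto\frac1n\int f_n\,d\mu$ converge pointwise — it would not be enough to know only that their infimum is concave. Arithmetic with $-\infty$ causes no difficulty: if $\beta[(f_n)]=-\infty$ then $g\equiv-\infty$ and $\mathcal{M}_{\max}[(f_n)]=\mathcal{M}_T$, which plainly contains an ergodic measure, while if $\beta[(f_n)]>-\infty$ the equality-case analysis above uses only the inequalities $g(\mu_i)\leq\beta[(f_n)]$.
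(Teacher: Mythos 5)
Your proposal is correct. The compactness and nonemptiness argument is the same as the paper's: $g(\mu)=\inf_n\frac1n\int f_n\,d\mu$ is upper semi-continuous by Lemma \ref{A1}, so it attains its maximum on the compact set $\mathcal{M}_T$ and the maximiser set is a closed, hence compact, subset. Where you diverge is in producing an ergodic maximiser. The paper takes any $\mu\in\mathcal{M}_{\max}[(f_n)]$, applies the ergodic decomposition theorem, and shows directly that $\mathbb{P}$-almost every ergodic component $\mu_\omega$ must itself lie in $\mathcal{M}_{\max}[(f_n)]$ — a little computation with the sets $\mathcal{Z}_{r,k}$ forcing each component to achieve the value $\beta$. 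You instead observe that $g$ is affine (as a pointwise limit of the affine functionals $\mu\mapsto\frac1n\int f_n\,d\mu$), so $\mathcal{M}_{\max}[(f_n)]$ is a closed \emph{face} of the simplex $\mathcal{M}_T$; Krein--Milman then gives an extreme point of this face, which you verify is extreme in all of $\mathcal{M}_T$ and hence ergodic. Both arguments lean on structure theory of $\mathcal{M}_T$ of comparable depth (ergodic decomposition versus the Choquet/extreme-point picture), so neither is substantially cheaper. The paper's route yields the mildly stronger byproduct that almost every ergodic component of \emph{any} maximising measure is maximising, whereas your route isolates affinity of the subadditive average as the organising fact, which is the standard move in additive ergodic optimisation and transfers cleanly here. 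Your flagged subtlety about affinity is the right one; for completeness, note also that since each sequence $(g_n(\mu))_n$ converges in $\mathbb{R}\cup\{-\infty\}$ it is automatically bounded above, so the extended-real arithmetic in $t g_n(\mu_1)+(1-t)g_n(\mu_2)$ never encounters an indeterminate form.
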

 \begin{proof}
If $\beta[(f_n)]=-\infty$ then $\mathcal{M}_{\max}[(f_n)]=\mathcal{M}_T$ and the result is trivial. We therefore assume $\beta[(f_n)] \in \mathbb{R}$. By Lemma \ref{A1} each of the maps $\mu \mapsto (1/n)\int f_n\,d\mu$ is upper semi-continuous, and it follows from this that the map $\mu \mapsto \inf_{n \geq 1}(1/n)\int f_n\,d\mu$ is upper semi-continuous also. Since $\mathcal{M}_T$ is compact this implies that $\mathcal{M}_{\max}[(f_n)]$ is compact and nonempty.
 
Let $\mu \in \mathcal{M}_{\max}[(f_n)]$. By the ergodic decomposition theorem, there exist a measurable space $(\Omega,\mathcal{F},\mathbb{P})$ and measurable function $\mu_{(\cdot)} \colon \Omega \to \mathcal{M}_T$ such that $\mu_\omega$ is ergodic $\mathbb{P}$-a.e. and such that for each Borel set $A \subseteq X$ the map $\omega \mapsto \mu_\omega(A)$ is $\mathcal{F}$-measurable  and satisfies $\mu(A) = \int_\Omega \mu_\omega(A) d\mathbb{P}(\omega)$. For each $r,k \in \mathbb{N}$ define
 \[\mathcal{Z}_{r,k} = \left\{\omega \in \Omega \colon  \frac{1}{r}\int f_r\,d\mu_\omega<\beta[(f_n)]-\frac{1}{k}\right\} \in \mathcal{F}.\]
If one has $\mathbb{P}(\mathcal{Z}_{r,k})>0$ for some $r,k \in \mathbb{N}$ then
 \[\beta[(f_n)] \leq \frac{1}{r}\int f_r\,d\mu = \frac{1}{r}\int_\Omega \int f_r\,d\mu_\omega d\mathbb{P}(\omega) \leq \beta[(f_n)]\left(1-\frac{\mathbb{P}(\mathcal{Z}_{r,k})}{k}\right)<\beta[(f_n)],\]
a contradiction. We conclude that $\mathbb{P}(\mathcal{Z}_{r,k})=0$ for all $r,k \in \mathbb{N}$ and thus
 \[\mathbb{P}\left(\left\{\omega \in \Omega \colon \mu_\omega \in \mathcal{M}_{\max}[(f_n)]\right\}\right)= \mathbb{P}\left(\left\{ \omega \in \Omega \colon \inf_{r \geq 1} \frac{1}{r} \int f_r\,d\mu_\omega \geq \beta[(f_n)] \right\}\right)=1.\]
 In particular there exists $\omega \in \Omega$ such that $\mu_\omega$ is ergodic and $\mu_\omega \in \mathcal{M}_{\max}[(f_n)]$.
 \end{proof}
 The following result gives an analogue of the \emph{subordination principle} described by T. Bousch \cite{B1}. While only parts of its statement are actually required in this article, the full statement is included for the sake of interest.
 \begin{lemma}\label{subordinationsprinzip}
 Let $(f_n)$ be a subadditive sequence of upper semi-continuous functions from $X$ to $\mathbb{R} \cup \{-\infty\}$, and suppose that there exists $\lambda \in \mathbb{R}$ such that $\sup \{ f_n(x) \colon x \in X\} =n\lambda$ for all $n \in \mathbb{N}$. Then $\beta[(f_n)] = \lambda$ and if we define for each $n$
\[Y_n := \left\{x \in X \colon f_n(x) =n\lambda\right\}\]
then $Y:= \bigcap_{n=1}^\infty Y_n$ is compact and nonempty and satisfies $TY \subseteq Y$. Furthermore, each $\mu \in \mathcal{M}_T$ satisfies $\mathcal{M}_{\max}[(f_n)]$ if and only if it satisfies $\mu(Y)=1$.
 \end{lemma}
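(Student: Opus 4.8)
The plan is to derive the whole statement from the semi-uniform subadditive ergodic theorem (Theorem~\ref{SUSAET}) together with the elementary fact, recalled above, that a subadditive numerical sequence $(a_n)$ satisfies $a_n/n \geq \lim_{m\to\infty} a_m/m$ for every $n$. First I would note that the hypothesis $\sup_{x\in X} f_n(x) = n\lambda$ is equivalent to the two statements that $f_n(x)\leq n\lambda$ for all $x$ and all $n$, and that this bound is attained for each $n$ (by upper semicontinuity of $f_n$ on the compact space $X$). Consequently $Y_n = \{x\in X\colon f_n(x)= n\lambda\} = \{x\in X\colon f_n(x)\geq n\lambda\}$ is a nonempty closed set for every $n$, so $Y=\bigcap_{n\geq 1}Y_n$ is closed, hence compact. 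Moreover the sets $Y_n$ are nested: if $f_{n+1}(x)=(n+1)\lambda$ then subadditivity gives $(n+1)\lambda = f_{n+1}(x)\leq f_1(T^n x)+f_n(x)\leq \lambda+f_n(x)$, forcing $f_n(x)=n\lambda$, so $Y_{n+1}\subseteq Y_n$; by the finite intersection property this already shows $Y\neq\emptyset$. Finally, Theorem~\ref{SUSAET} yields $\beta[(f_n)] = \lim_{n\to\infty}\frac1n\sup_{x\in X}f_n(x) = \lambda$.

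The heart of the argument is the observation that every $\mu\in\mathcal{M}_{\max}[(f_n)]$ satisfies $\int f_n\,d\mu = n\lambda$ for all $n$. Indeed, the numerical sequence $a_n:=\int f_n\,d\mu$ is well defined in $\mathbb{R}\cup\{-\infty\}$ because $f_n$ is bounded above, and it is subadditive (integrate the defining inequality using the $T$-invariance of $\mu$); hence $\frac1n a_n \geq \lim_{m\to\infty}\frac1m a_m = \beta[(f_n)] = \lambda$, while the pointwise bound $f_n\leq n\lambda$ gives $a_n\leq n\lambda$. Therefore $a_n=n\lambda$, which in particular is finite, so $f_n\in L^1(\mu)$ and the inequality $f_n\leq n\lambda$ must be an equality $\mu$-almost everywhere; equivalently $\mu(Y_n)=1$ for every $n$, hence $\mu(Y)=1$. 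This proves the ``only if'' direction of the final assertion, and since $\mathcal{M}_{\max}[(f_n)]$ is nonempty by Lemma~\ref{nonemptiness} it gives a second proof that $Y\neq\emptyset$.

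It remains to verify forward invariance and the converse implication. If $x\in Y$ then, again by subadditivity, $(n+1)\lambda = f_{n+1}(x)\leq f_n(Tx)+f_1(x)=f_n(Tx)+\lambda$, so $f_n(Tx)\geq n\lambda$; combined with $f_n(Tx)\leq n\lambda$ this gives $f_n(Tx)=n\lambda$ for every $n$, i.e. $Tx\in Y$, whence $TY\subseteq Y$. Conversely, if $\mu\in\mathcal{M}_T$ satisfies $\mu(Y)=1$ then $\mu(Y_n)=1$ for every $n$, so $\int f_n\,d\mu=n\lambda$ and therefore $\lim_{n\to\infty}\frac1n\int f_n\,d\mu=\lambda=\beta[(f_n)]$, so $\mu\in\mathcal{M}_{\max}[(f_n)]$. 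The only delicate point in the whole argument is the bookkeeping around the possible value $-\infty$ of the functions $f_n$: one must check that $\int f_n\,d\mu$ is always meaningful (it is, since $f_n$ is bounded above) and that once this integral is known to equal the finite number $n\lambda$ the almost-everywhere identity $f_n=n\lambda$ is forced. I do not expect a genuine obstacle here; the lemma is in essence a repackaging of Theorem~\ref{SUSAET} together with the characterisation of subadditive limits as infima.
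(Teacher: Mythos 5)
Your proof is correct but takes a genuinely different route in two places. For the identity $\beta[(f_n)]=\lambda$, you invoke Theorem~\ref{SUSAET}; the paper instead argues directly and more economically: $\beta\leq\lambda$ is immediate from $\int f_n\,d\mu\leq n\lambda$, while $\beta\geq\lambda$ follows by applying Krylov--Bogolioubov to $T$ restricted to the nonempty compact invariant set $Y$ (yielding an invariant $\mu$ with $\mu(Y)=1$, hence $\int f_n\,d\mu=n\lambda$ for all $n$), so no appeal to the semi-uniform ergodic theorem is needed. The more substantial difference is in the implication $\mu\in\mathcal{M}_{\max}[(f_n)]\Rightarrow\mu(Y)=1$. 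You argue through the integrals: since $a_n=\int f_n\,d\mu$ is subadditive and $\inf_n a_n/n=\beta=\lambda$ while $a_n\leq n\lambda$ pointwise, you get $a_n=n\lambda$ exactly, and then $f_n=n\lambda$ $\mu$-a.e.\ because $n\lambda-f_n\geq 0$ integrates to zero. The paper proves the contrapositive: if $\mu(X\setminus Y)>0$, upper semicontinuity of some $f_r$ yields an open set $U\subseteq X\setminus Y$ with $\mu(U)>0$ and $f_r<r(\lambda-\delta)$ on $U$, whence $\frac1r\int f_r\,d\mu<\lambda$. Your version is a clean pure-measure-theory argument and does not use semicontinuity of $f_r$ at this point; the paper's version is self-contained (independent of Theorem~\ref{SUSAET}, whose proof in the appendix is considerably heavier) and, via Krylov--Bogolioubov, simultaneously establishes both $\beta\geq\lambda$ and the ``if'' direction. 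Both routes are sound; the paper's is preferable if one wants the lemma to be usable without first proving the semi-uniform subadditive ergodic theorem, which is not actually needed here.
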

 \begin{proof}
Since $\sup f_n = n\lambda$ for each $n$ it is clear that each $Y_n$ is closed and that $\beta[(f_n)] \leq \lambda$. If $x \in Y_{n+1}$ then since
\[(n+1)\lambda = f_{n+1}(x) \leq f_n(x) + f_1(T^nx) \leq f_n(x) + \lambda\leq (n+1)\lambda\]
we have $x \in Y_n$ also. It follows that the intersection $\bigcap_{n=1}^\infty Y_n$ is nonempty. If $x \in Y$ then for each $n \in \mathbb{N}$ we have
\[(n+1)\lambda = f_{n+1}(x) \leq f_n(Tx) + f_1(x) = f_n(Tx) +\lambda \leq (n+1)\lambda\]
so that $f_n(Tx)=n\lambda$, and we deduce that $Tx \in Y$. By the Krylov-Bogolioubov theorem there exists at least one invariant measure $\mu$ such that $\mu(Y)=1$. Since then $n^{-1}\int f_n\,d\mu = \lambda$ for every $n \in \mathbb{N}$ it follows that $\beta[(f_n)] \geq \lambda$, and this argument also shows that if $\nu(Y)=1$ and $\nu \in \mathcal{M}_T$ then necessarily $\nu \in \mathcal{M}_{\max}[(f_n)]$. Finally, suppose that $\mu \in \mathcal{M}_T$ with $\mu(X \setminus Y)>0$. Choose $r \in \mathbb{N}$, $\delta>0$ and a nonempty open set $U \subseteq X \setminus Y$ such that $\mu(U)>0$ and $f_r(x)<r(\lambda-\delta)$ for all $x\in U$. We have
\[\inf_{n \geq 1}\frac{1}{n} \int f_n\,d\mu \leq \frac{1}{r}\int f_r\,d\mu \leq (1-\mu(U))\lambda +\mu(U)(\lambda-\delta) < \lambda = \beta[(f_n)]\]
and therefore $\mu \notin \mathcal{M}_{\max}[(f_n)]$.
 \end{proof}
The proposition given below will be needed to make use of the hypothesis $\|\mathcal{A}(x,n)\| \leq M$ in the proof of Theorem \ref{onepointone}. The proof is not dissimilar to \cite[Theorem 1]{M}.
\begin{proposition}\label{P1}
Suppose that $T \colon X \to X$ is minimal. Let $(f_n)$ be a subadditive sequence of upper semi-continuous functions from $X$ to $\mathbb{R} \cup 
\{-\infty\}$. Suppose that there exists $C \in \mathbb{R}$ such that $f_n(x) \leq C$ for all $n \in \mathbb{N}$ and $x \in X$. Then either $|f_n(x)|\leq C$ for all $n \in \mathbb{N}$ and $x \in X$, or $\lim_{n \to \infty} \frac{1}{n} \sup_{x \in X} f_n(x) <0$.
\end{proposition}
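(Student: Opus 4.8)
The plan is to prove the contrapositive: I will assume that the first alternative fails and deduce that $\lim_{n\to\infty}\frac1n\sup_{x\in X}f_n(x)<0$. Since $f_n(x)\le C$ everywhere, the first alternative fails precisely when there exist $N\in\NN$ and $x_0\in X$ with $f_N(x_0)<-C$. First observe that the limit in question exists in $\RR\cup\{-\infty\}$: writing $s_n:=\sup_{x\in X}f_n(x)$, subadditivity of $(f_n)$ gives $f_{n+m}(x)\le f_n(T^mx)+f_m(x)$, and taking suprema yields $s_{n+m}\le s_n+s_m$, so $\lim_{n\to\infty}s_n/n=\inf_{n\ge1}s_n/n$; moreover $s_n\le C$ forces this limit to be $\le 0$. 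It therefore suffices to exhibit a uniform linear rate of decay for $s_n$.

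Next I localise the hypothesis at $x_0$ using upper semicontinuity and minimality. Choose $\varepsilon>0$ small enough that $f_N(x_0)<-C-\varepsilon$ (any $\varepsilon$ works if $f_N(x_0)=-\infty$), and set $U:=\{x\in X: f_N(x)<-C-\varepsilon\}$. Since $f_N$ is upper semicontinuous, $U$ is open, and it is nonempty because $x_0\in U$. Because $T$ is minimal, every forward orbit is dense, so $\bigcup_{j\ge0}T^{-j}U=X$, and by compactness there is $R\in\NN$ with $X=\bigcup_{j=0}^{R-1}T^{-j}U$. Equivalently, for every $x\in X$ and every $m\ge0$ the orbit segment $T^mx,T^{m+1}x,\dots,T^{m+R-1}x$ meets $U$; this is the only place minimality enters.

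The core is a tiling estimate, uniform in $x$. Fix $x\in X$ and $n\in\NN$. Put $p_{-1}:=-N$, and for $i\ge0$ let $p_i$ be the least index $\ge p_{i-1}+N$ with $T^{p_i}x\in U$; by the previous paragraph $p_i$ exists and satisfies $p_{i-1}+N\le p_i<p_{i-1}+N+R$, hence inductively $p_i+N<(i+1)(N+R)$. Let $g$ be the number of $i\ge0$ with $p_i+N\le n$, so that $p_0,\dots,p_{g-1}$ are defined, the intervals $[p_i,p_i+N)$ for $0\le i<g$ are pairwise disjoint and contained in $[0,n)$, and $g\ge n/(N+R)-1$. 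The complement of these "good" intervals in $[0,n)$ is the disjoint union of the intervals $[p_{i-1}+N,p_i)$ for $0\le i<g$ (the first being $[0,p_0)$, by the convention $p_{-1}=-N$) and the leftover interval $[p_{g-1}+N,n)$; each of these at most $g+1$ filler intervals has length less than $N+R$. Discarding fillers of length zero and applying subadditivity of $(f_n)$ along the resulting partition of $[0,n)$ into intervals of positive integer length gives $f_n(x)\le\sum_{i=0}^{g-1}f_N(T^{p_i}x)+\sum(\text{filler terms})$. Bounding each good term by $f_N(T^{p_i}x)<-C-\varepsilon$ and each filler term by $C$ yields $f_n(x)<g(-C-\varepsilon)+(g+1)C=C-g\varepsilon\le C+\varepsilon-\frac{\varepsilon}{N+R}\,n$. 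This bound is uniform in $x$, so $s_n\le C+\varepsilon-\frac{\varepsilon}{N+R}n$ and hence $\lim_{n\to\infty}s_n/n\le-\varepsilon/(N+R)<0$, as required.

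I expect the only real care to be needed in the bookkeeping of the third paragraph: arranging that the tiling is a genuine partition of $[0,n)$ into intervals of positive integer length (this is why one discards empty fillers and uses $(f_n)$ only for $n\ge1$), that the good blocks genuinely stay inside $[0,n)$ and are disjoint, and — crucially — that the number of non-good blocks is only $O(g)$ with implied constant $1$, so that the term $-g\varepsilon$ dominates $(g+1)C$ irrespective of the sign or size of $C$. The two structural inputs, openness of $U$ from upper semicontinuity and the uniform return time $R$ from minimality, are each used exactly once, and together they are the mechanism by which a single negative value $f_N(x_0)<-C$ propagates to a uniform negative growth rate.
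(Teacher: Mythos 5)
Your proof is correct once a small gap is patched, and it takes a genuinely different route from the paper's. The paper proceeds measure-theoretically: it invokes Theorem \ref{SUSAET} together with Lemma \ref{nonemptiness} to produce an ergodic maximising measure $\mu$, uses minimality only to conclude $\mu(U)>0$, and then applies the Birkhoff and Kingman ergodic theorems to find a single generic point $x_0\in U$ at which to run the tiling argument, finally transferring back to $\sup_x f_n(x)$ via the semi-uniform subadditive ergodic theorem. You replace all of that machinery with a purely topological input: since $\{T^{-j}U\}_{j\ge0}$ is an open cover of $X$ by minimality, compactness gives a uniform return time $R$, so the tiling runs at every $x\in X$ simultaneously and the conclusion $s_n\le C+\varepsilon-\varepsilon n/(N+R)$ comes out directly, with no ergodic theorems and no appeal to Theorem \ref{SUSAET}. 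Your argument is therefore more elementary and yields a uniform bound where the paper's produces a pointwise one that must then be lifted.

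The gap is the closing claim that the bookkeeping works ``irrespective of the sign or size of $C$.'' When you bound the filler sum by $(g+1)C$ you are tacitly assuming each of the $g+1$ filler slots contributes a term $\le C$; but after discarding empty fillers you have only $k\le g+1$ of them, and the honest bound is $kC$. If $C<0$ and $k<g+1$ then $kC>(g+1)C$, so $(g+1)C$ is \emph{not} an upper bound. In the extreme case $k=0$ (all fillers empty, $n=gN$) your estimate degenerates to $f_n(x)<-g(C+\varepsilon)$, which is positive and growing whenever $\varepsilon<|C|$; since you chose $\varepsilon$ ``small enough'' this can actually occur. The fix is exactly the reduction the paper makes at the start of its proof: if $C<0$ the conclusion is immediate, since subadditivity gives $f_n(x)\le\sum_{k=0}^{n-1}f_1(T^kx)\le nC$ and hence $\lim_n s_n/n\le C<0$; one may then assume $C\ge0$, under which hypothesis your estimate $(g+1)C\ge kC$ is valid and the rest of your argument goes through.
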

 \begin{proof}
If $C<0$ then the result is trivial since $\sup f_n \leq n\sup f_1 \leq nC$ for each $n\in \mathbb{N}$, so we assume $C \geq 0$. Using Theorem \ref{SUSAET} and Lemma \ref{nonemptiness} we may take $\lambda \in \mathbb{R} \cup \{-\infty\}$ and an ergodic measure $\mu \in \mathcal{M}_T$ such that
 \[\lambda = \lim_{n \to \infty} \frac{1}{n} \sup_{x \in X} f_n(x) = \inf_{n \geq 1} \frac{1}{n} \int f_n\,d\mu.\]
Suppose that $f_N(z) < -(C+\varepsilon)<0$ for some $z \in X$ and $N,\varepsilon>0$. Using the semi-continuity of $f_N$, choose a nonempty open set $U \subseteq X$ such that $f_N(x) < -(C+\varepsilon)$ for all $x \in U$. Since $T$ is minimal we have $\mu(U)>0$. 
 
Using the Birkhoff ergodic theorem and the subadditive ergodic theorem respectively, choose $x_0 \in U$ such that $n^{-1}\sum_{k=0}^{n-1} \chi_U(T^kx_0) \to \mu(U)$ and $n^{-1}f_n(x_0) \to \lambda$. Let $(m_j)_{j=0}^\infty$ be the increasing sequence of integers given by $m_0=0$ and $m_{j+1} = \min\{m>m_j \colon T^{m_j}x_0 \in U\}$. Now let $(n_r)_{r=0}^\infty$ be given by $n_r = m_{Nr}$ so that $n_{r+1} \geq n_r + N$ and $T^{n_r}x_0 \in U$ for each $r \geq 0$. Note that $\lim_{r \to \infty}r/n_r = \mu(U)/N>0$. For each $r \in \mathbb{N}$ we have
 \[f_{n_r}(x_0) \leq\sum_{k=1}^r \left(f_N\left(T^{n_{k-1}}x_0\right) + f_{n_k-n_{k-1}-N}\left(T^{n_{k-1}+N}x_0\right)\right) \leq -r\left(C+\varepsilon\right) + rC\]
and hence
\[\lim_{n \to \infty} \frac{1}{n} \sup_{x \in X} f_n(x)=\lambda = \lim_{r \to \infty} \frac{1}{n_r} f_{n_r}(x_0) \leq -\frac{\mu(U)\varepsilon}{N}<0.\]
The proof is complete.
 \end{proof}

\section{Proof of Theorem \ref{onepointone}}\label{four} 

We require the following simple result on the metric $d_{\Gr}$.
\begin{lemma}\label{Grassmannian-metric}
Let $V, W \in \Gr(p,d)$ where $1 \leq p \leq d$. Then,
\[d_{\Gr}(V,W) = \max_{\substack{v \in V\\\|v\|=1}} \mathrm{dist}(v,W).\]
\end{lemma}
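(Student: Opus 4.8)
The plan is to prove the two inequalities separately. Throughout write $P:=P^{\perp}_V$, $Q:=P^{\perp}_W$ for the orthogonal projections, so that $d_{\Gr}(V,W)=\|P-Q\|$, and set $\delta(V,W):=\max_{v\in V,\,\|v\|=1}\mathrm{dist}(v,W)$ (the maximum being over a nonempty set since $p\geq1$); the assertion is then $\|P-Q\|=\delta(V,W)$. One direction is immediate: for a unit vector $v\in V$ the nearest point of $W$ is $Qv$, so $\mathrm{dist}(v,W)=\|v-Qv\|=\|(P-Q)v\|$ because $Pv=v$, and taking the supremum over $v$ yields $\delta(V,W)\leq\|P-Q\|$. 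Note also that, since the unit vectors of $V$ are precisely the vectors $Px$ with $\|Px\|=1$ and operator norms are determined on the unit ball, one has $\delta(V,W)=\|(I-Q)P\|$.

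For the reverse inequality I would take an arbitrary $x\in\mathbb{C}^d$, decompose it orthogonally as $x=Px+(I-P)x$, and write
\[(P-Q)x=(I-Q)Px-Q(I-P)x.\]
Here $(I-Q)Px\in\Ima(I-Q)=W^{\perp}$ while $Q(I-P)x\in\Ima Q=W$, so the two terms are orthogonal and $\|(P-Q)x\|^2=\|(I-Q)Px\|^2+\|Q(I-P)x\|^2$. The first term equals $\mathrm{dist}(Px,W)^2\leq\delta(V,W)^2\|Px\|^2$. For the second, the operator norm of $Q$ restricted to $V^{\perp}$ is $\|Q(I-P)\|=\|(I-P)Q\|=\max_{w\in W,\,\|w\|=1}\mathrm{dist}(w,V)=\delta(W,V)$, so $\|Q(I-P)x\|\leq\delta(W,V)\|(I-P)x\|$. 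Thus $\|(P-Q)x\|^2\leq\delta(V,W)^2\|Px\|^2+\delta(W,V)^2\|(I-P)x\|^2$, and once we know the symmetry $\delta(V,W)=\delta(W,V)$ the right-hand side is $\leq\delta(V,W)^2(\|Px\|^2+\|(I-P)x\|^2)=\delta(V,W)^2\|x\|^2$, giving $\|P-Q\|\leq\delta(V,W)$ and completing the proof.

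The one substantive step — and the only place the hypothesis $\dim V=\dim W=p$ is used — is the symmetry $\delta(V,W)=\delta(W,V)$, which I would obtain as follows. Using $\delta(V,W)=\|(I-Q)P\|$ and $(I-Q)^2=I-Q$ we get $\delta(V,W)^2=\|P(I-Q)P\|=\|P-PQP\|$, and likewise $\delta(W,V)^2=\|Q-QPQ\|$. The matrices $PQP=(QP)^{*}(QP)$ and $QPQ=(PQ)^{*}(PQ)$ are positive semidefinite and, since $\rank(QP)=\rank(PQ)$, have a common rank $k\leq p$; moreover, since $AB$ and $BA$ always share their nonzero eigenvalues with multiplicities (apply this along $PQP$, $QP$, $PQ$, $QPQ$), they have the same nonzero eigenvalues $\mu_1,\dots,\mu_k\in(0,1]$. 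Now $P-PQP$ vanishes on $V^{\perp}$ and maps the $p$-dimensional subspace $V$ into itself, where its eigenvalues are $1-\mu_1,\dots,1-\mu_k$ together with $1$ repeated $p-k$ times; hence $\|P-PQP\|=1$ when $k<p$ and $\|P-PQP\|=1-\min_i\mu_i$ when $k=p$. The same description holds verbatim for $\|Q-QPQ\|$, with $W$ in place of $V$ but the \emph{same} $k$ and $\mu_i$; so the two norms agree and $\delta(V,W)=\delta(W,V)$. (One could instead appeal to the principal-angle decomposition of the pair $(V,W)$, but the eigenvalue bookkeeping above sidesteps it.) I expect this symmetry to be the only genuine obstacle; the remaining steps are routine.
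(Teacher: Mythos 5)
Your proof is correct. The paper disposes of this statement almost entirely by citation: after using $\|P-Q\|\le 1$ to handle the case where the right-hand side equals $1$, it observes that the right-hand side is $\|(I-Q)P\|$ and invokes Kato's Theorem I-6.34, whose content is $\|P-Q\|=\max(\|(I-Q)P\|,\|(I-P)Q\|)$ together with the symmetry $\|(I-Q)P\|=\|(I-P)Q\|$ when $\dim V=\dim W$. Your argument inlines a proof of that theorem: the orthogonal decomposition $(P-Q)x=(I-Q)Px-Q(I-P)x$ plus Pythagoras gives the upper bound, and the eigenvalue bookkeeping for $P-PQP$ and $Q-QPQ$ gives the symmetry. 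One advantage of your version is that it eliminates the paper's case split, since your description of the two norms (each equal to $1$ when $k<p$, and to $1-\min_i\mu_i$ when $k=p$) covers both situations uniformly. A small streamlining is available: $PQP=(QP)^*(QP)$ and $QPQ=(QP)(QP)^*$, so their common nonzero spectrum follows in a single step rather than via the four-term $AB$/$BA$ chain in your parenthetical.
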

\begin{proof}
Note that $d_{\Gr}(V,W) \leq 1$ for every $V,W \in \Gr(p,d)$, see e.g. \cite[p.56]{Kato}. Let $P^\perp_V$ and $P^\perp_W$ denote the operators of orthogonal projection onto $V$ and $W$ respectively. If $\max\{\mathrm{dist}(v,W)\colon v \in V,\,\|v\|=1\}=1$ then the result is clear. Otherwise, since
\[\|(I - P^\perp_W)P^\perp_V\| = \max_{\|v\|=1}\mathrm{dist}(Pv,W) = \max_{\substack{v \in V\\\|v\|=1}} \mathrm{dist}(v,W)\]
the result follows from \cite[Theorem I-6.34]{Kato}.
\end{proof}
For each $B \in \Mat_d(\mathbb{C})$ write $|B|:=\sqrt{B^*B}$, and for $1 \leq i \leq d$ let $\sigma_1(B)\geq \ldots \geq \sigma_d(B)$ denote the eigenvalues of $|B|$ listed in decreasing order, allowing repetitions if multiplicities occur. Clearly $0 \leq \sigma_i(B) \leq\|B\|$ for every $i$. The values $\sigma_i(B)$ depend continuously on $B \in \Mat_d(\mathbb{C})$, and if $A,B \in \Mat_d(\mathbb{C})$ then for $1 \leq \ell \leq d$,
\[\prod_{i=1}^\ell \sigma_i(AB) \leq \left(\prod_{i=1}^\ell \sigma_i(A) \right)\left(\prod_{i=1}^\ell \sigma_i(B) \right),\]
see e.g. \cite{GK}. For each $x \in X$, $n \in \mathbb{N}$ and $1 \leq \ell \leq d$ let us define $f_n^\ell(x)=\sum_{i=1}^\ell \log \sigma_i(\mathcal{A}(x,n))$. Each $(f_n^\ell)$ is a subadditive sequence of continuous functions from $X$ to $\mathbb{R} \cup \{-\infty\}$ and the results of \S3 may therefore be applied.

Let $M \geq 1$ such that $\|\mathcal{A}(x,n)\| \leq M$ for all $x \in X$ and all $n \in \mathbb{N}$. For each integer $\ell$ in the range $1 \leq \ell \leq d$, define
\[\theta_\ell := \lim_{n \to \infty}\sup_{x \in X} \frac{1}{n} \sum_{i=1}^\ell \log \sigma_i( \mathcal{A}(x,n))\]
which exists by Theorem \ref{SUSAET}. For $x \in X$, $n \in \mathbb{N}$ and $1 < \ell \leq d$ we have
\[\sum_{i=1}^\ell \log \sigma_i(\mathcal{A}(x,n)) \leq \sum_{i=1}^{\ell-1} \log\sigma_i(\mathcal{A}(x,n))+ \log M \leq \ell \log M\]
and therefore $\theta_{\ell+1} \leq \theta_\ell \leq 0$ for $1 \leq \ell<d$. If $\theta_1<0$ then Theorem \ref{onepointone} is vacuously true with $p=0$, $\mathcal{V}(x) \equiv \{0\}$ and $\mathcal{W}(x)\equiv \mathbb{C}^d$, so we henceforth assume $\theta_1=0$. Take $p\in \mathbb{N}$ such that $\theta_\ell=0$ for $1 \leq \ell \leq p$ and $\theta_\ell <0$ for $p<\ell \leq d$. Applying Proposition \ref{P1} to $(f_n^\ell)$ it follows that for $1 \leq \ell \leq p$
\[-\ell \log M \leq \sum_{i=1}^\ell \log \sigma_i(\mathcal{A}(x,n))\leq \ell \log M\]
for all $x \in X$ and $n \in \mathbb{N}$. We conclude from this that there is $\delta_0>0$ such that
\[\min_{1 \leq i \leq p}\inf_{x \in X} \inf_{n \geq 1} \sigma_i(\mathcal{A}(x,n)) \geq \delta_0.\]
Since $\theta_i<0$ for $p<i \leq d$ we similarly deduce that there exist $C_0>0$, $\xi \in (0,1)$ such that for each $n \in \mathbb{N}$
\[\max_{p<i \leq d}\sup_{x \in X} \sigma_i(\mathcal{A}(x,n)) \leq C_0 \xi^n.\]

Given $x \in X$ and $n \in \mathbb{N}$, let $U^+_n(x) \in \Gr(p,d)$ be the vector space spanned by those eigenvectors of $|\mathcal{A}(x,n)|$ which correspond to the eigenvalues $\sigma_1(\mathcal{A}(x,n))$ up to $\sigma_p(\mathcal{A}(x,n))$ and let $U^-_n(x) \in \Gr(d-p,d)$ be the space spanned by those eigenvectors associated to the remaining eigenspaces. If $v$ is an eigenvector of $|\mathcal{A}(x,n)|$ with eigenvalue $\sigma_i(A(x,n))$ then
\begin{equation}\label{biscuits}\|\mathcal{A}(x,n)v\|^2 = \langle \mathcal{A}(x,n)v,\mathcal{A}(x,n)v\rangle = \langle \mathcal{A}(x,n)^*\mathcal{A}(x,n)v,v\rangle =\sigma_i(\mathcal{A}(x,n))^2\|v\|^2.\end{equation}
Since $|\mathcal{A}(x,n)|$ is a normal matrix there exists an orthonormal basis for $\mathbb{C}^d$ consisting of its eigenvectors. In particular $U^+_n(x)$ is orthogonal to $U^-_n(x)$, and using \eqref{biscuits} we may derive
\[\inf \left\{\|\mathcal{A}(x,n)v\|\colon v \in U^+_n(x)\text{ and }\|v\|=1  \right\}\geq \delta_0,\]
\[\sup\left\{\|\mathcal{A}(x,n)v\|\colon v \in U^-_n(x)\text{ and }\|v\|=1\right\}\leq C_0\xi^n\]
for all $x \in X$ and $n \in \mathbb{N}$.

We now construct the function $\mathcal{V}$ and establish its properties. The essential idea is to show for each $x\in X$ that the sequence $\mathcal{A}(x,n)U^+_{2n}(x)$ forms a Cauchy sequence in $\Gr(p,d)$ and to define $\mathcal{V}(x)$ to be its limit. This is related to the construction in \cite{FLQ}, but our argument is simplified by the presence of estimates which are uniform in $x$.

Let $x \in X$, $n\in \mathbb{N}$ and $v \in \mathbb{C}^d$; if $\|\mathcal{A}(x,n)v\| \geq \varepsilon \|v\|$ for some $\varepsilon>0$, an easy calculation shows that for $1 \leq k <n$
\begin{equation}\label{easy1}\|\mathcal{A}(x,k)v\| \geq M^{-1}\varepsilon \|v\|\end{equation}
and
\begin{equation}\label{easy2}\|\mathcal{A}(T^kx,n-k)\mathcal{A}(x,k)v\| = \|\mathcal{A}(x,n)v\|\geq M^{-1}\varepsilon\|\mathcal{A}(x,k)v\|.\end{equation}
For each $n \geq 1, \kappa>0$ and $x \in X$, define a subset of $\Gr(p,d)$ by
\[\mathfrak{V}(x,n,\kappa):=\left\{ \mathcal{A}(T^{-n}x,n)W \colon W\in \Gr(p,d),\, \|\mathcal{A}(T^{-n}x,2n)w\| \geq \kappa\|w\|\,\forall\,w \in W\right\}.\]
Note that for $\kappa \leq \delta_0 $ we have $\mathcal{A}(T^{-n}x,n)U_{2n}^+(T^{-n}x) \in \mathfrak{V}(x,n,\kappa)$ and so the latter set is nonempty. Moreover we have
\begin{equation}\label{rid1}\mathfrak{V}(x,n,\kappa) \subseteq \mathfrak{V}\left(x,k,M^{-2}\kappa\right)\end{equation}
for $x \in X$ and $1 \leq k \leq n$ and
\begin{equation}\label{rid2}\mathcal{A}(x) \mathfrak{V}(x,n,\kappa) \subseteq \mathfrak{V}\left(x,n-1,M^{-1}\kappa\right)\end{equation}
 by virtue of \eqref{easy1} and \eqref{easy2}. We claim that for each $n \in \mathbb{N}$ and $x \in X$,
\begin{equation}\label{diameter1}\diam \bigcup_{r =n}^\infty \mathfrak{V}(x,r,\kappa) \leq \kappa^{-1}C_1\xi^n\end{equation}
where $C_1:=2C_0M$. Suppose that
\[\mathcal{A}\left(T^{-(n+m)}x,n+m\right)w \in \mathcal{A}\left(T^{-(n+m)}x,n+m\right)W \in \mathfrak{V}(x,n+m,\kappa).\]
Let $P$ be given by orthogonal projection from $\mathbb{C}^d$ onto $U_n^+(T^{-n}x)$. We have
\[\left\|\mathcal{A}\left(T^{-{(n+m)}}x,n+m\right)w-\mathcal{A}\left(T^{-n}x,n\right)P\mathcal{A}\left(T^{-(n+m)}x,m\right)w\right\|\]
\[\leq C_0\xi^n\left\|\mathcal{A}\left(T^{-(n+m)}x,m\right)w\right\| \leq C_0M\xi^n\left\|\mathcal{A}\left(T^{-(n+m)}x,n+m\right)w\right\|\]
where we have used \eqref{easy2}. It follows that
\[\mathrm{dist}(v,\mathcal{A}(T^{-n}x,n)U_n^+(x)) \leq MC_0\kappa^{-1}\xi^n\|v\|\]
for all $v \in \mathcal{A}\left(T^{-(n+m)}x,n+m\right)W$ and therefore \eqref{diameter1} holds by Lemma \ref{Grassmannian-metric}. We deduce that for each $x \in X$ the set
\[\bigcap_{n=1}^\infty \overline{\bigcup_{r=n}^\infty \mathfrak{V}(x,r,\kappa)}\]
contains a unique element for each $\kappa \leq \delta_0$. Since clearly $\mathfrak{V}(x,n,\kappa_1) \subseteq \mathfrak{V}(x,n,\kappa_2)$ for $\kappa_2 \leq \kappa_1$ it follows that this element does not depend on $\kappa$. Denote this element by $\mathcal{V}(x)$. We have $\mathcal{A}(x)\mathcal{V}(x)=\mathcal{V}(Tx)$ as an easy consequence of \eqref{rid2}. Now take $n$ large enough that $\delta_0^{-1}C_1\xi^n<\delta_0/3M$ and let $P$ be given by orthogonal projection onto some arbitrarily selected element of $\mathfrak{V}(x,n,\delta_0)$. Given any $v \in \mathcal{V}(x)$, we have $\|v-Pv\| \leq (\delta_0/3M)\|v\|$ as a consequence of \eqref{diameter1}. In particular this implies $\|Pv\| \geq (2/3)\|v\|$. We have
\[\|\mathcal{A}(x,n)v\| \geq \|\mathcal{A}(x,n)Pv\| - \|\mathcal{A}(x,n)v - \mathcal{A}(x,n)Pv\|\]\[ \geq \delta_0\|Pv\| - M\|v-Pv\| \geq (\delta_0/3)\|v\|.\]
It follows from \eqref{easy1} that for all $x \in X$ and every $n \in \mathbb{N}$ we have $\|\mathcal{A}(x,n)v\| \geq (\delta_0/3M)\|v\|$ for every $v \in \mathcal{V}(x)$.

It remains to show that $\mathcal{V}(x)$ depends continuously on $x$. Define $\delta:= \delta_0/3M$. Let $n \in \mathbb{N}$ and suppose that $x,y \in X$ satisfy
\[\max\left\{\|\mathcal{A}(T^{-n}x,2n)-\mathcal{A}(T^{-n}y,2n)\|, \|\mathcal{A}(T^{-n}x,n)-\mathcal{A}(T^{-n}y,n)\|\right\} \leq \delta\xi^n.\]
If $w \in \mathcal{V}(T^{-n}x)$ then
\[\|\mathcal{A}(T^{-n}y,2n)w\| \geq \|\mathcal{A}(T^{-n}x,2n)w\| - \delta\xi^n\|w\| \geq (1-\xi)\delta\|w\|\]
and it follows that $\mathcal{A}(T^{-n}y,n)\mathcal{V}(T^{-n}x) \in \mathfrak{V}(y,n,(1-\xi)\delta)$. If $v = \mathcal{A}(T^{-n}x,n)w \in \mathcal{V}(x) = \mathcal{A}(T^{-n}x,n)\mathcal{V}(T^{-n}x)$, then
\[\|\mathcal{A}(T^{-n}x,n)w - \mathcal{A}(T^{-n}y,n)w\| \leq \delta\xi^n\|w\| \leq \xi^n\|v\|.\]
It follows from Lemma \ref{Grassmannian-metric} that $d_{\Gr}(\mathcal{V}(x),\mathcal{A}(T^{-n}y,n)\mathcal{V}(T^{-n}x)) \leq \xi^n$, and therefore $d_{\Gr}(\mathcal{V}(x),\mathcal{V}(y)) \leq (1+C_1(1-\xi)^{-1}\delta^{-1})\xi^n$ as required.

We next construct the function $\mathcal{W}$ and establish its properties. Similarly to the case of $\mathcal{V}$, the idea is to show that $U_n^-(x)$ forms a Cauchy sequence and to define $\mathcal{W}(x)$ to be its limit. This section of the proof thus more closely approaches certain proofs of the Oseledec multiplicative ergodic theorem such as that given in \cite{Ruelle}, though as before we differ from the measurable case in that we require uniform estimates.

For each $n \in \mathbb{N}$, $x \in X$ and $K>0$ define
\[\mathfrak{W}(x,n,K) = \left\{W \in \Gr(d-p,d) \colon \|\mathcal{A}(x,n)v\| \leq K\xi^n\|v\|\text{ for all }v \in W\right\}.\]
Note that $U_n^-(x) \in \mathfrak{W}(x,n,K)$ for every $K \geq C_0$ and in particular $\mathfrak{W}(x,n,K)$ is nonempty. We assert that for each $n \in \mathbb{N}$ we have
\begin{equation}\label{Techdiam}\diam \bigcup_{r=n}^\infty \mathfrak{W}(x,r,K) \leq KC_2\xi^n\end{equation}
where $C_2 = 2\delta_0^{-1}(M+1)$. Suppose that $r\geq n\in \mathbb{N}$ and $W \in \mathfrak{W}(x,r,K)$ are given, and let $v \in W$. Write $v=u_1+u_2$ with $u_1 \in U^+_n(x)$ and $u_2 \in U^-_n(x)$; we have $\|u_2\| \leq \|v\|$ since the two spaces are mutually orthogonal. Since
\[\|\mathcal{A}(x,r)(v-u_2)\| \leq K\xi^{r}\|v\| + MC_0\xi^n\|u_2\| \leq (MC_0+K)\xi^n\|v\|\]
and
\[\|\mathcal{A}(x,r)(v-u_2)\| = \|\mathcal{A}(x,r)u_1\| \geq \delta_0\|u_1\|\]
we have
\[\mathrm{dist}(v,U^-_n(x)) = \|u_1\| \leq \delta^{-1}_0(MC_0+K)\xi^n\|v\| \leq \delta_0^{-1}(M+1)K\xi^n\|v\|\]
and \eqref{Techdiam} holds by Lemma \ref{Grassmannian-metric}. It follows that for each $x \in X$ and $K \geq C_0$, the set
\[\bigcap_{n=1}^\infty \overline{\bigcup_{r=n}^\infty \mathfrak{W}(x,r,K)}\]
contains a unique element, which we denote by $\mathcal{W}(x)$. Since clearly $\mathfrak{W}(x,n,K_1) \subseteq \mathfrak{W}(x,n,K_2)$ when $K_1 \leq K_2$  the definition of $\mathcal{W}(x)$ is not influenced by the choice of $K \geq C_0$. Given $x \in X$ and $n \in \mathbb{N}$, let $P$ be given by orthogonal projection onto $U^-_n(x) \in \mathfrak{W}(x,n,C_0)$. For each $w \in \mathcal{W}(x)$ we have
$\|w-Pw\| \leq C_0C_2\xi^n\|w\|$ as a consequence of \eqref{Techdiam} and hence
\[\|\mathcal{A}(x,n)w\| \leq \|\mathcal{A}(x,n)(w-Pw)\| + \|\mathcal{A}(x,n)Pw\|\]\[\leq MC_0C_2\xi^n\|w\| + C_0\xi^n\|Pw\| \leq (MC_2+1)C_0\xi^n\|w\|\]
as required for the statement of Theorem \ref{onepointone}.

We next prove that $\mathcal{W}(x)$ depends continuously on $x$. Let $x,y \in X$ and suppose that $d(x,y)$ is small enough that $\|\mathcal{A}(x,n)-\mathcal{A}(y,n)\| \leq \xi^n$. Since for any $w \in \mathcal{W}(x)$,
\[\|\mathcal{A}(y,n)w\| \leq \xi^n\|w\| + \|\mathcal{A}(x,n)w\| \leq (MC_2C_0+C_0+1)\xi^n\|w\|,\]
we have $\mathcal{W}(x) \in \mathfrak{W}(y,n,MC_2C_0+C_0+1)$ and it follows from \eqref{Techdiam} that
\[d_{\Gr}(\mathcal{W}(x),\mathcal{W}(y)) \leq (MC_2C_0+C_0+1)C_2\xi^n.\]
The following standard argument shows that $\mathcal{W}$ is invariant. For each $x \in X$ define
\[\tilde{\mathcal{W}}(x) = \left\{v \in \mathbb{C}^d \colon \limsup_{n \to \infty} \|A^n_xv\|^{1/n} <1\right\}.\]
Clearly $\tilde{\mathcal{W}}(x)$ is a linear subspace of $\mathbb{C}^d$, $\mathcal{W}(x) \subseteq \tilde{\mathcal{W}}(x)$, and $\mathcal{A}(x)\tilde{\mathcal{W}}(x) \subseteq\tilde{\mathcal{W}}(Tx)$. If $\dim \tilde{\mathcal{W}}(x)>\dim \mathcal{W}(x)$ then $\tilde{\mathcal{W}}(x) \cap \mathcal{V}(x) \neq \{0\}$ which clearly entails a contradiction. It follows that $\tilde{\mathcal{W}}(x) = \mathcal{W}(x)$ for all $x \in Z$ and therefore $\mathcal{A}(x)\mathcal{W}(x) =\mathcal{A}(x)\tilde{\mathcal{W}}(x) \subseteq \tilde{\mathcal{W}}(Tx) = \mathcal{W}(Tx)$, which concludes our study of the properties of $\mathcal{W}$.

For each $x \in X$ let $P(x)$ denote the projection having image $\mathcal{V}(x)$ and kernel $\mathcal{W}(x)$. It remains to prove that $P(x)$ depends continuously on $x$. We will show that for every $x \in X$, if $y$ satisfies
\begin{equation}\label{bargage}3\|P(x)\|.[d_{\Gr}(\mathcal{V}(x),\mathcal{V}(y)) + d_{\Gr}(\mathcal{W}(x),\mathcal{W}(y))]<\frac{1}{2}\end{equation}
then
\begin{equation}\label{bleh}\|P(x)-P(y)\| \leq 12\|P(x)\|.[d_{\Gr}(\mathcal{V}(x),\mathcal{V}(y)) + d_{\Gr}(\mathcal{W}(x),\mathcal{W}(y))].\end{equation}
Since $X$ is compact we may deduce that $\sup\|P\|$ is finite and the result follows. 

 For notational convenience we write $Q(x)=I-P(x)$ for all $x \in X$. Fix $x \in X$, and for each $y \in X$ define $U(x,y) = P^{\perp}_{\mathcal{V}(y)} P(x) + P^{\perp}_{\mathcal{W}(y)}Q(x)$, where $P^{\perp}_Z$ denotes orthogonal projection onto $Z$. Since $I = P(x)+Q(x)=P^\perp_{\mathcal{V}(x)}P(x) + P^\perp_{\mathcal{W}(x)}Q(x)$ we have
\begin{equation}\label{icky}\left\|U(x,y)-I\right\| \leq (2\|P(x)\|+1).\left[d_{\Gr}(\mathcal{V}(x),\mathcal{V}(y)) + d_{\Gr}(\mathcal{W}(x),\mathcal{W}(y))\right].\end{equation}
Suppose that $y$ satisfies \eqref{bargage}. Then $U(x,y)$ is invertible and
\begin{equation}\label{ecky}\|U(x,y)^{-1}-I \| \leq \sum_{n=1}^\infty \|U(x,y)-I\|^n \end{equation}
\[\leq 6\|P(x)\|\left[d_{\Gr}(\mathcal{V}(x),\mathcal{V}(y)) + d_{\Gr}(\mathcal{W}(x),\mathcal{W}(y))\right].\]
Since for each $v \in \mathcal{V}(x)$ and $w \in \mathcal{W}(x)$ we have
\[U(x,y)P(x)(v+w) = U(x,y)v =P(y)U(x,y)(v+w)\]
it follows that $P(y) = U(x,y)P(x)U(x,y)^{-1}$. Combining this with \eqref{icky} and \eqref{ecky} yields \eqref{bleh} and the proof is complete.

\section{Proof of Theorem \ref{Tech}}\label{five}

Let $\mathsf{A}\subset \Mat_d(\mathbb{C})$ be compact and product bounded with $\varrho(\mathsf{A})=1$, let $\vvv\cdot\vvv$ be an extremal norm for $\mathsf{A}$, and choose $M>0$ such that $\vvv v \vvv \leq M\|v\| \leq M^2\vvv v \vvv$ for all $v \in \mathbb{C}^d$. As in the introduction we let $\mathcal{A} \colon \mathsf{A}^{\mathbb{Z}} \to \Mat_d(\mathbb{C})$ be given by projection onto the zeroth co-ordinate, let $T \colon \mathsf{A}^{\mathbb{Z}} \to \mathsf{A}^{\mathbb{Z}}$ be given by the shift map, and take $d$ to be the metric on $\mathsf{A}^{\mathbb{Z}}$ defined previously. Clearly $\mathcal{A}$ and $T$ are Lipschitz continuous. For each $n \in \mathbb{N}$ we have
\[\left\{\mathcal{A}(x,n) \colon x \in \mathsf{A}^{\mathbb{Z}} \right\} = \left\{A_n\cdots A_1 \colon A_i \in \mathsf{A}\right\}\]
and therefore $\sup \left\{\log \vvv \mathcal{A}(x,n)\vvv \colon x \in \mathsf{A}^{\mathbb{Z}}\right\}=0$ for all $n \in \mathbb{N}$. By Lemma \ref{subordinationsprinzip} the set
\[Y :=\{x \colon \vvv\mathcal{A}(x,n)\vvv=1\text{ for all }n \geq 1\}\]
is compact and nonempty and satisfies $TY \subseteq Y$.

 Let $Z = TZ$ be any minimal set contained in $Y$. Note that for all $x \in Z$ and $n \in \mathbb{N}$ we have $\| \mathcal{A}(x,n)\|  \leq M^2$ since $\vvv \mathcal{A}(x,n)\vvv=1$.  We may therefore apply Theorem \ref{onepointone} to the minimal set $Z$ and the cocycle $\mathcal{A}$. If $p=0$ then we would have $\vvv\mathcal{A}(x,n)\vvv<1$ for some $x \in Z$ and $n \in \mathbb{N}$, so it must be the case that $p \in \mathbb{N}$. To prove Theorem \ref{Tech}, we must show firstly that the functions $\mathcal{V},\mathcal{W}$ and $P$ provided by Theorem \ref{onepointone} are H\"older continuous, and secondly that for all $x\in Z$ and $n\in \mathbb{N}$ one has $\vvv\mathcal{A}(x,n)v\vvv=\vvv v\vvv$ for every $v \in \mathcal{V}(x)$.

The proof of H\"older continuity is straightforward. Let $\delta, \xi$ be as given by Theorem \ref{onepointone}. Given any $\varepsilon>0$, choose $C_\varepsilon>0$ such that $2C_\varepsilon n e^{-n\varepsilon} \leq 1$ for all $n \in \mathbb{N}$.  If $d(x,y) \leq C_\varepsilon\delta M^{-4} e^{-n\varepsilon}\xi^{n} 2^{-n}$ then
\[\max_{-n \leq k \leq n} \left\|\mathcal{A}(T^kx)-\mathcal{A}(T^ky)\right\| \leq 2^n \sum_{i \in \mathbb{Z}} \frac{\left\|\mathcal{A}(T^ix)-\mathcal{A}(T^iy)\right\|}{2^{|i|}} \leq C_\varepsilon\delta M^{-4}\xi^{n}e^{-n\varepsilon}\]
and therefore
\[\left\|\mathcal{A}(T^{-n}x,2n)-\mathcal{A}(T^{-n}y,2n)\right\|\]
\[ \leq \sum_{i=-n}^{n-1} \left\|\mathcal{A}(T^{i+1}x,n-i-1)\right\|.\left\|\mathcal{A}(T^ix)-\mathcal{A}(T^iy)\right\|.\left\|\mathcal{A}(T^{-n}y,n+i)\right\|\]\[ \leq 2C_\varepsilon n\delta \xi^{n}e^{-\varepsilon n} \leq \delta \xi^n\]
where we adopt the convention $\mathcal{A}(\cdot,0)\equiv I$. The same estimate clearly also yields $\|\mathcal{A}(T^{-n}x,n)-\mathcal{A}(T^{-n}y,n)\| \leq \delta\xi^n$ and $\|\mathcal{A}(x,n)-\mathcal{A}(y,n)\| \leq \delta \xi^n \leq \xi^n$. Applying Theorem \ref{onepointone} we deduce that  $d_{\Gr}(\mathcal{V}(x),\mathcal{V}(y))$ and $d_{\Gr}(\mathcal{W}(x),\mathcal{W}(y))$ are both bounded by $\tilde C \xi^n$. It follows that for $\alpha := \log \xi / (\log \xi - \log 2-\varepsilon)>0$,
\[\sup_{\substack{x,y \in Z\\x \neq y}} \frac{d_{\Gr}(\mathcal{V}(x),\mathcal{V}(y))}{d(x,y)^\alpha} < \infty\]
and similarly for $\mathcal{W}$ so that $\mathcal{V}$ and $\mathcal{W}$ are both $\alpha$-H\"older continuous. By Theorem \ref{onepointone} this implies that $P$ is $\alpha$-H\"older continuous also.

We now prove that for every $x \in Z$ and $n \in \mathbb{N}$ we have $\vvv \mathcal{A}(x,n)v\vvv = \vvv v \vvv$ for every $v \in \mathcal{V}(x)$. For each $x \in Z$ define
\[\mathcal{S}(x) :=\left\{B \colon \liminf_{n\to \infty} \max\big[d(T^nx,x),\vvv\mathcal{A}(x,n)-B\vvv\big] =0\right\}.\]
Since $T$ acts minimally on $Z$, $x$ is recurrent, and since $\vvv \mathcal{A}(x,n)\vvv=1$ for each $n$ the set $\mathcal{S}(x)$ is nonempty. If $\lim_{k \to \infty}B_k = B$ with each $B_k \in \mathcal{S}(x)$ then we may choose a strictly increasing sequence $(n_k)$ such that $d(T^{n_k}x,x)<1/k$, $\vvv B_k-B\vvv \leq 1/k$ and $\vvv \mathcal{A}(x,n_k)-B_k \vvv <1/k$ for each $k \in \mathbb{N}$, which shows that $B \in \mathcal{S}(x)$ and therefore $\mathcal{S}(x)$ is closed. Since clearly $\vvv B \vvv = 1$ for all $B \in \mathcal{S}(x)$ it follows that $\mathcal{S}(x)$ is compact.

We claim that $\mathcal{S}(x)$ is a semigroup. Let $B_1,B_2 \in \mathcal{S}(x)$; it suffices to show that for any $N,\varepsilon>0$ there is $n>N$ such that $d(T^nx,x)<\varepsilon$ and $\vvv \mathcal{A}(x,n)-B_1B_2\vvv<\varepsilon$. Since $B_1 \in \mathcal{S}(x)$ we can choose $n_1>N$ such that $\vvv\mathcal{A}(x,n_1)-B_1\vvv <\varepsilon/3$ and $d(T^{n_1}x,x)<\varepsilon/2$. Since $B_2 \in \mathcal{S}(x)$ we may choose $n_2>N$ such that $\vvv \mathcal{A}(x,n_2)-B_2 \vvv < \varepsilon/3$ and such that $d(T^{n_2}x,x)$ is so small as to guarantee $\vvv \mathcal{A}(T^{n_2}x,n_1)-\mathcal{A}(x,n_1)\vvv < \varepsilon/3$ and $d(T^{n_1+n_2}x,T^{n_1}x)<\varepsilon/2$. We have
\begin{align*}
\vvv\mathcal{A}(x,n_1+n_2)-B_1B_2\vvv &\leq \vvv\mathcal{A}(T^{n_2}x,n_1)\mathcal{A}(x,n_2) -\mathcal{A}(x,n_1)\mathcal{A}(x,n_2)\vvv\\&+\vvv\mathcal{A}(x,n_1)\mathcal{A}(x,n_2) - \mathcal{A}(x,n_1)B_2 \vvv\\
&+ \vvv\mathcal{A}(x,n_1)B_2 - B_1B_2\vvv<\varepsilon\end{align*}
and
\[d(T^{n_1+n_2}x,x) \leq d(T^{n_1+n_2}x,T^{n_1}x) + d(T^{n_1}x,x) < \varepsilon\]
as required to prove the claim.

Given any $B \in \mathcal{S}(x)$, take $(n_r)_{r=1}^\infty$ such that $\mathcal{A}(x,n_r) \to B$ and $d(T^{n_r}x,x) \to 0$. If $v$ is a nonzero element of $\mathcal{V}(x)$ then clearly $\mathcal{A}(x,n_r)v \to Bv$. Since Theorem \ref{onepointone} gives $\|\mathcal{A}(x,n_r)v\| \geq \delta \| v \|$ for all $r \in \mathbb{N}$ we have $\|Bv\| \geq \delta \| v \|>0$. Since $\mathcal{A}(x,n_r)\mathcal{V}(x)=\mathcal{V}(T^{n_r}x)$, $T^{n_r}x \to x$ and $\mathcal{V}$ is continuous it follows that in fact $Bv$ is a nonzero element of  $\mathcal{V}(x)$. By a  simpler version of the same argument we see that $Bw=0$ for every $w \in \mathcal{W}(x)$, and we conclude that the image of $B$ is precisely $\mathcal{V}(x)$ whilst the kernel of $B$ is precisely $\mathcal{W}(x)$. 

We now finish the proof. Since $\mathcal{S}(x)$ is a compact semigroup, it contains an idempotent element $P$ (see e.g. \cite{HM}). If $\vvv \mathcal{A}(x,k)v\vvv \leq (1-\varepsilon)\vvv v \vvv$ for some vector $v \in \mathcal{V}(x)$ and positive integer $k$, then $\vvv \mathcal{A}(x,n)v\vvv \leq (1-\varepsilon)\vvv v \vvv$ for all large enough $n$ and therefore $\vvv Pv \vvv \leq (1-\varepsilon)\vvv v \vvv$. But since $v$ lies in the image of $P$ we have $v = Pw = P^2w = Pv$ for some $w \in \mathbb{C}^d$, and we conclude that $\vvv v\vvv$ must equal zero. It follows that for each $x \in X$ and $n \in \mathbb{N}$ we have $\vvv \mathcal{A}(x,n)v\vvv = \vvv v \vvv$ for all $v \in \mathcal{V}(x)$ and the theorem is proved.

\begin{remark} Since we have identified both the image and the kernel of the idempotent $P$, it follows that for each $x$ the semigroup $\mathcal{S}(x)$ in fact contains a \emph{unique} idempotent element, namely the projection $P(x)$. The family of semigroups $\mathcal{S}(x)$ should be compared to the ``limit semigroup'' introduced by F. Wirth \cite{Wirth}.\end{remark}


 \section{Proof of Theorem \ref{QBWF}}\label{six}
 The following lemma allows us to ignore cases in which $\mathsf{A}$ fails to be product bounded. Results of this kind are used in the proofs of Theorem \ref{BWF} given by Berger-Wang \cite{BW}, Elsner \cite{E}, and Shih \emph{et al.} \cite{SWP}.
\begin{lemma}\label{redux}
Let $\mathsf{A}\subset \Mat_d(\mathbb{C})$ be bounded set such that $\varrho(\mathsf{A})=1$ and $\mathsf{A}$ is not product bounded. Then exist a positive integer $d' < d$ and $U \in GL_d(\mathbb{C})$ such that if $P$ denotes the natural projection from $\mathbb{C}^d$ to $\mathbb{C}^{d'}$ then the set $\hat{\mathsf{A}}:=PU^{-1}\mathsf{A}U$ satisfies $\varrho(\mathsf{A})=1$, is product bounded and satisfies $\varrho_n^-(\mathsf{A}) \geq \varrho^-_n(\hat{\mathsf{A}})$ for each $n \in \mathbb{N}$.
\end{lemma}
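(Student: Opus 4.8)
The plan is to reduce the dimension by splitting off common invariant subspaces, the point being that at the critical scale $\varrho(\mathsf{A})=1$ the failure of product boundedness is incompatible with irreducibility. The key input is the classical fact that an irreducible bounded set of matrices with joint spectral radius $1$ — one admitting no common invariant subspace strictly between $\{0\}$ and the whole space — is necessarily product bounded; equivalently, an irreducible family possesses an extremal norm (see, e.g., \cite{RS}, and cf.\ the equivalence recalled in \S2). Granting this, since $\mathsf{A}$ has $\varrho(\mathsf{A})=1$ but is not product bounded, $\mathsf{A}$ cannot be irreducible, so there is a common invariant subspace $V$ with $\{0\}\subsetneq V\subsetneq\mathbb{C}^d$.

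Next I would carry out one reduction step. Fixing a basis of $\mathbb{C}^d$ whose first $\dim V$ vectors span $V$ and letting $U_0\in GL_d(\mathbb{C})$ be the associated change of coordinates, every $U_0^{-1}AU_0$ with $A\in\mathsf{A}$ is block triangular with diagonal blocks $A'\in\Mat_{\dim V}(\mathbb{C})$ and $A''\in\Mat_{d-\dim V}(\mathbb{C})$; write $\mathsf{A}',\mathsf{A}''$ for the two families of diagonal blocks. Two standard facts are used here. First, the joint spectral radius of a block-triangular family is the maximum of the joint spectral radii of its diagonal blocks, so $\max\{\varrho(\mathsf{A}'),\varrho(\mathsf{A}'')\}=\varrho(\mathsf{A})=1$; retain whichever of $\mathsf{A}',\mathsf{A}''$ has joint spectral radius $1$. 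Second, the set of eigenvalues of a block-triangular matrix is the union of the sets of eigenvalues of its diagonal blocks, and under block multiplication a product of elements of $U_0^{-1}\mathsf{A}U_0$ has as diagonal blocks the corresponding products in $\mathsf{A}'$ and $\mathsf{A}''$; since $\rho(\cdot)$ is conjugation invariant it follows that the spectral radius of any product of $n$ elements of $\mathsf{A}$ dominates that of the associated product in the retained family, and taking suprema over $\mathsf{A}^n$ gives $\varrho_n^-(\mathsf{A})\geq\varrho_n^-(\text{retained family})$ for every $n$. Thus the retained family lives in strictly smaller dimension, still has joint spectral radius $1$, and has its $\varrho_n^-$ dominated by that of $\mathsf{A}$.

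Now iterate: if the retained family is product bounded, stop; otherwise it is again a non-product-bounded family with joint spectral radius $1$, hence reducible by the first paragraph, and one applies the reduction step again. The dimension strictly decreases at each step, and the process cannot descend below dimension $1$, since a one-dimensional bounded family $\{a\colon a\in S\}$ with $\varrho=\sup_{a\in S}|a|=1$ has $|a|\leq1$ for all $a\in S$ and is therefore product bounded. So the process terminates at some product-bounded $\hat{\mathsf{A}}\subset\Mat_{d'}(\mathbb{C})$ with $d'<d$, with $\varrho(\hat{\mathsf{A}})=1$, and with $\varrho_n^-(\mathsf{A})\geq\varrho_n^-(\hat{\mathsf{A}})$ for all $n$. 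Composing the successive changes of coordinates (each padded by an identity block so as to act on all of $\mathbb{C}^d$) produces a single $U\in GL_d(\mathbb{C})$, and composing the successive coordinate projections produces the natural projection $P\colon\mathbb{C}^d\to\mathbb{C}^{d'}$, so that $\hat{\mathsf{A}}=PU^{-1}\mathsf{A}U$ after the routine identification of the retained diagonal block with the action on the relevant coordinate subspace of $\mathbb{C}^d$.

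The principal obstacle is the cited input — product boundedness of irreducible families at $\varrho=1$ — which could instead be proved directly as follows. Since $\mathsf{A}$ is not product bounded there are products $\Pi_k$ of strictly increasing length with $\|\Pi_k\|\to\infty$; since $\varrho(\mathsf{A})=1$ every $A\in\mathsf{A}$ has $\rho(A)\leq1$ and hence $|\det A|\leq1$, so any subsequential limit $B$ of $\Pi_k/\|\Pi_k\|$ satisfies $|\det B|=\lim_k|\det\Pi_k|\,\|\Pi_k\|^{-d}=0$ and is a nonzero singular matrix. Letting $\mathcal{L}$ be the set of all such limits, one checks that $\mathcal{L}$ is closed under multiplication by elements of $\mathsf{A}$ on either side followed by renormalisation, whence $\sum_{B\in\mathcal{L}}\Ima B$ is a nonzero $\mathsf{A}$-invariant subspace; if it is proper one is done, and otherwise $\bigcap_{B\in\mathcal{L}}\Ker B$ is a proper $\mathsf{A}$-invariant subspace, which is nonzero unless $\mathsf{A}$ is irreducible — in which case one is forced back onto the cited fact. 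So the essential content is precisely the irreducible case, and the cleanest course is to cite it.
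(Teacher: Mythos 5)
Your proposal is correct and follows essentially the same route as the paper: the paper simply cites Elsner's Lemma~4 for the single reduction step and then iterates exactly as you do, terminating at the base case $d'=1$. Your version unpacks that cited lemma into its standard ingredients (reducibility of non-product-bounded families at $\varrho=1$, the joint-spectral-radius formula for block-triangular families, and monotonicity of $\varrho_n^-$ under passage to a diagonal block), which is precisely the content that Elsner's result packages.
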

\begin{proof}
Using \cite[Lemma 4]{E} we can find $\hat{\mathsf{A}}=PU^{-1}\mathsf{A}U$ which satisfies all of the required properties except possibly for product boundedness. By repeating this procedure we either obtain a product bounded $\hat{\mathsf{A}}$ with $d'>1$, or reduce to the case $d=1$ in which case product boundedness is satisfied automatically.
\end{proof}
If $\varrho(\mathsf{A})=0$ then we have nothing to prove, and if $\varrho(\mathsf{A})>0$ then by normalising $\mathsf{A}$ if necessary we may assume that $\varrho(\mathsf{A})=1$. To prove Theorem \ref{QBWF}, therefore, it suffices by Lemma \ref{redux} to assume that $\mathsf{A}$ is a finite set of $d \times d$ matrices such that $\varrho(\mathsf{A})=1$ and $\mathsf{A}$ is product bounded. Since $\mathsf{A}$ is finite, the metric described in the introduction is Lipschitz equivalent to the more easily-used metric given by
\[d\left[(A_i)_{i \in \mathbb{Z}},(B_i)_{i \in \mathbb{Z}}\right] = 2^{-\sup\{n \geq 0 \colon A_i = B_i \text{ for } |i|  \leq n\}}.\]

The following proposition may be obtained easily by modifying a result of X. Bressaud and A. Quas \cite[Theorem 1]{BQ}.
\begin{proposition}\label{urghurghurgh}
Let $\mathsf{A}$ be finite, let $Z \subseteq \mathsf{A}^{\mathbb{Z}}$ be compact with $TZ=Z$, and let $N \in \mathbb{N}$. Then there exist sequences of integers $(r_n)$, $(m_n)$ and a sequence of points $x_n \in \mathsf{A}^{\mathbb{Z}}$ such that $m_n^{-1} \log n\to 0$ and such that for all sufficiently large $n$ each $r_n$ is divisible by $N$, $r_n \leq n$, $T^{r_n}x_n=x_n$ and
\[\max_{0 \leq k < r_n} d(T^kx_n,Z) \leq 2^{-m_n}.\]
\end{proposition}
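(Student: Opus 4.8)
The plan is to invoke a known theorem of Bressaud and Quas on the periodic approximation of closed shift-invariant sets and to extract from it precisely the statement we need. The Bressaud--Quas result (\cite[Theorem 1]{BQ}) says roughly the following: if $Z$ is a closed shift-invariant subset of a full shift $\mathsf{A}^{\mathbb{Z}}$ on a finite alphabet, then $Z$ can be approximated by periodic orbits in the sense that there exist points $x_n$ of period $r_n$ together with windows of size $m_n$ over which the orbit of $x_n$ tracks $Z$, with $m_n$ growing fast enough relative to $\log r_n$ (or $\log n$) that the ratio $m_n^{-1}\log n$ tends to $0$. So the first step is simply to recall that statement carefully and note that its conclusion already provides sequences $(r_n)$, $(m_n)$ and points $x_n$ with $T^{r_n}x_n=x_n$ and $\max_{0\le k<r_n} d(T^kx_n,Z)\le 2^{-m_n}$, where $r_n$ is roughly of size $n$ and $m_n^{-1}\log n\to 0$.

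Next I would address the two cosmetic modifications that separate our Proposition \ref{urghurghurgh} from the literal Bressaud--Quas statement. First, we want $r_n$ to be divisible by a prescribed integer $N$; this is harmless because replacing a period-$r_n$ point by the same point viewed as having period $Nr_n$ (or rather, adjusting so that the period is the least common multiple, or simply multiplying $r_n$ by at most $N$) changes $r_n$ by a bounded factor and leaves the tracking estimate $\max_{0\le k<r_n}d(T^kx_n,Z)\le 2^{-m_n}$ valid over the enlarged window of periodicity, since $T^{r_n}x_n=x_n$ already implies $T^{Nr_n}x_n=x_n$ and the orbit over $[0,Nr_n)$ is just the orbit over $[0,r_n)$ repeated. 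The condition $m_n^{-1}\log n\to 0$ is unaffected by multiplying $r_n$ by the fixed constant $N$. Second, we want $r_n\le n$; after the multiplication by $N$ we may need to re-index the sequence (pass to a subsequence, or relabel $n$) so that the period is bounded above by the index, which is compatible with $m_n^{-1}\log n\to 0$ since that condition only becomes easier as $n$ grows.

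The remaining point is to make sure the metric used in \cite{BQ} matches the one we are working with here. In the paragraph preceding the proposition we have already reduced, using finiteness of $\mathsf{A}$, to the Lipschitz-equivalent metric $d[(A_i),(B_i)] = 2^{-\sup\{n\ge 0\colon A_i=B_i \text{ for }|i|\le n\}}$, which is exactly the kind of ``agreement on a central window'' metric in which the Bressaud--Quas estimates are naturally phrased; the bound $\max_{0\le k<r_n}d(T^kx_n,Z)\le 2^{-m_n}$ then literally says that each $T^kx_n$ agrees with some point of $Z$ on the central coordinate block of radius $m_n$. So I would just cite the equivalence of metrics and transport the estimate, absorbing any Lipschitz constant into a harmless shift of $m_n$ by an additive constant (which again does not affect $m_n^{-1}\log n\to 0$).

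I expect the main obstacle to be essentially bookkeeping rather than a genuine difficulty: one has to state the Bressaud--Quas theorem in a form general enough to cover an arbitrary closed invariant $Z$ (not necessarily a subshift of finite type or a minimal set), verify that their growth rate for the window size really does give $m_n^{-1}\log n\to 0$ after our re-indexing, and check that the divisibility-by-$N$ adjustment genuinely costs only a bounded factor in the period. None of these steps requires new ideas, so the proof is short: quote \cite[Theorem 1]{BQ}, re-index and pad the period to a multiple of $N$, and translate through the Lipschitz-equivalent metric. Hence I would write the proof of Proposition \ref{urghurghurgh} as a brief deduction from \cite{BQ}, spelling out only the re-indexing and the $N$-divisibility trick.
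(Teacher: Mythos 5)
Your proposal is correct and matches the paper's approach exactly: the paper itself gives no proof of Proposition~\ref{urghurghurgh} beyond the single remark that it ``may be obtained easily by modifying'' Bressaud--Quas \cite[Theorem 1]{BQ}, and you have spelled out precisely the two routine modifications — padding the period to a multiple of $N$ (which preserves the tracking bound since the orbit over $[0,Nr_n)$ is the orbit over $[0,r_n)$ repeated, and costs only the bounded factor $N$, harmless after re-indexing) and transporting the estimate through the Lipschitz-equivalent word metric (which only shifts $m_n$ by an additive constant and so leaves $m_n^{-1}\log n\to 0$ intact).
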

Now let $\vvv\cdot \vvv$ be an extremal norm for $\mathsf{A}$, let $Y$ be as in Theorem \ref{Tech}, and let $Z \subseteq Y$ be any minimal set. Let $\mathcal{V}$, $\mathcal{W}$, $P$, $C$ and $\xi$ be as given by Theorem \ref{Tech}, and define $Q(x)=I-P(x)$ for each $x \in Z$. Note that for $v \in \mathcal{V}(x)$ and $w \in \mathcal{W}(x)$ we have
\[\mathcal{A}(x,n)P(x)(v+w) = \mathcal{A}(x,n)v = P(T^nx)\mathcal{A}(x,n)(v+w)\]
and therefore $\mathcal{A}(x,n)P(x) = P(T^nx) \mathcal{A}(x,n)$ for all $x \in Z$ and $n \in \mathbb{N}$. Clearly this implies that $\mathcal{A}(x,n)Q(x)=Q(T^nx)\mathcal{A}(x,n)$ for all $x \in Z$ and $n \in \mathbb{N}$.

The following two lemmas, and the general strategy of their application, are suggested by \cite{K}. For each $x \in Z$ and $\theta>0$ let us define
\[\mathfrak{C}(x,\theta)=\left\{v \in \mathbb{C}^d \colon \theta\vvv P(x)v\vvv\geq \vvv Q(x)v\vvv\right\}.\]
\begin{lemma}\label{cone1}
Let $x,y \in Z$ and suppose that $\vvv P(x)-P(y)\vvv \leq \theta<1/5$. Then $\mathfrak{C}(x,\theta) \subseteq \mathfrak{C}(y,3\theta)$.
\end{lemma}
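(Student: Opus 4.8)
The plan is to argue directly from the definition of $\mathfrak{C}$, using nothing more than the triangle inequality. Fix $v\in\mathfrak{C}(x,\theta)$, so that $\vvv Q(x)v\vvv\le\theta\vvv P(x)v\vvv$; the goal is to show $\vvv Q(y)v\vvv\le 3\theta\vvv P(y)v\vvv$. The first step is the trivial observation that, since $Q=I-P$, we have $\vvv Q(x)-Q(y)\vvv=\vvv P(y)-P(x)\vvv\le\theta$. If $\vvv P(x)v\vvv=0$ then the membership $v\in\mathfrak{C}(x,\theta)$ forces $Q(x)v=0$ as well, hence $v=P(x)v+Q(x)v=0$ and there is nothing to prove; so from now on I assume $\vvv P(x)v\vvv>0$.

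The second step consists of two perturbation estimates. From $v=P(x)v+Q(x)v$ and the hypothesis on $v$ one gets $\vvv v\vvv\le(1+\theta)\vvv P(x)v\vvv$. Combining this with $\vvv P(x)-P(y)\vvv\le\theta$ gives
\[\vvv P(y)v\vvv\ge\vvv P(x)v\vvv-\theta\vvv v\vvv\ge(1-\theta-\theta^2)\vvv P(x)v\vvv,\]
and combining it with $\vvv Q(x)-Q(y)\vvv\le\theta$ gives
\[\vvv Q(y)v\vvv\le\vvv Q(x)v\vvv+\theta\vvv v\vvv\le(2\theta+\theta^2)\vvv P(x)v\vvv.\]
Since $\theta<1/5$ we have $1-\theta-\theta^2>0$, so $\vvv P(y)v\vvv>0$ and we may divide.

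The final step is to note that the two displayed inequalities yield $\vvv Q(y)v\vvv/\vvv P(y)v\vvv\le(2\theta+\theta^2)/(1-\theta-\theta^2)$, and that this last quantity is $\le 3\theta$ exactly when $\theta(1-4\theta-3\theta^2)\ge0$, which holds for $0<\theta<1/5$ because then $4\theta+3\theta^2<4/5+3/25<1$. Hence $v\in\mathfrak{C}(y,3\theta)$, as required. I do not anticipate a genuine obstacle here: this is a routine triangle-inequality estimate, and the only role of the hypothesis $\theta<1/5$ is to keep the denominator $1-\theta-\theta^2$ positive and to make the crude final constant $3$ go through; a sharper constant is available, but $3\theta$ is all that is needed in the sequel.
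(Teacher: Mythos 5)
Your proof is correct and takes essentially the same approach as the paper's: a routine triangle-inequality perturbation of $P$ and $Q$, with the hypothesis $\theta<1/5$ used to keep a denominator positive and to verify the final numerical inequality (equivalently $4\theta+3\theta^2<1$ in both arguments). The only cosmetic difference is that you argue directly from $v\in\mathfrak{C}(x,\theta)$, handling the degenerate case $P(x)v=0$ separately, whereas the paper argues by contrapositive, assuming $v\notin\mathfrak{C}(y,3\theta)$ and deducing $v\notin\mathfrak{C}(x,\theta)$; either route is fine.
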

\begin{proof}
If $v \notin \mathfrak{C}(y,3\theta)$ then $\vvv Q(y) v\vvv>3\theta \vvv P(y)v\vvv$ and therefore
\[3\theta\vvv P(x)v\vvv \leq 3\theta\vvv P(y)\vvv + 3\theta^2 \vvv v \vvv <  \vvv Q(y)v\vvv + 3\theta^2 \vvv v \vvv \leq \vvv Q(x)v\vvv + (\theta+3\theta^2)\vvv v\vvv\]
\[\leq (1+\theta+3\theta^2)\vvv Q(x)v\vvv + (\theta+3\theta^2)\vvv P(x)v\vvv\]
and therefore
\[\theta \vvv P(x)v\vvv \leq \frac{2\theta - 3\theta^2}{1+\theta+3\theta^2}\vvv P(x)v\vvv<\vvv Q(x)v\vvv\]
so that $v \notin \mathfrak{C}(x,\theta)$. 
\end{proof}
\begin{lemma}\label{cone2}
Let $x \in Z$ and $n \in \mathbb{N}$, and suppose that $v \in \mathfrak{C}(x,\theta)$ for some $\theta \in (0,1]$. Then $\mathcal{A}(x,n)v \in \mathfrak{C}(T^nx,K_1\xi^n\theta)$ and $\vvv \mathcal{A}(x,n)v\vvv \geq (1-\theta - K_1\xi^n\theta)\vvv v \vvv$, where $K_1>0$ does not depend on $x$, $n$, $\theta$ or $v$.
\end{lemma}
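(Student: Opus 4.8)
The plan is to exploit the splitting $\mathbb{C}^d = \mathcal{V}(x)\oplus\mathcal{W}(x)$ together with the two estimates furnished by Theorem \ref{Tech}: that $\mathcal{A}(x,n)$ acts as a $\vvv\cdot\vvv$-isometry on $\mathcal{V}(x)$ and as a $C\xi^n$-contraction on $\mathcal{W}(x)$. First I would write $v = v_1 + v_2$ with $v_1 := P(x)v \in \mathcal{V}(x)$ and $v_2 := Q(x)v \in \mathcal{W}(x)$; the hypothesis $v \in \mathfrak{C}(x,\theta)$ is then exactly the inequality $\vvv v_2\vvv \leq \theta\vvv v_1\vvv$, whence also $\vvv v\vvv \leq \vvv v_1\vvv + \vvv v_2\vvv \leq (1+\theta)\vvv v_1\vvv$.

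Next I would push $v$ forward by the cocycle and separate the two components using the intertwining identities $\mathcal{A}(x,n)P(x) = P(T^nx)\mathcal{A}(x,n)$ and $\mathcal{A}(x,n)Q(x) = Q(T^nx)\mathcal{A}(x,n)$ established earlier in this section --- here $T^nx$ again lies in $Z$ since $TZ = Z$, so that $P(T^nx)$ and $Q(T^nx)$ are defined. This gives $P(T^nx)\mathcal{A}(x,n)v = \mathcal{A}(x,n)v_1$ and $Q(T^nx)\mathcal{A}(x,n)v = \mathcal{A}(x,n)v_2$, and applying Theorem \ref{Tech} to each piece yields $\vvv P(T^nx)\mathcal{A}(x,n)v\vvv = \vvv v_1\vvv$ together with $\vvv Q(T^nx)\mathcal{A}(x,n)v\vvv \leq C\xi^n\vvv v_2\vvv \leq C\xi^n\theta\vvv v_1\vvv$.

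The cone membership is now immediate: combining the last two facts gives $\vvv Q(T^nx)\mathcal{A}(x,n)v\vvv \leq C\xi^n\theta\,\vvv P(T^nx)\mathcal{A}(x,n)v\vvv$, i.e. $\mathcal{A}(x,n)v \in \mathfrak{C}(T^nx, C\xi^n\theta)$. For the lower bound on the norm I would use the triangle inequality $\vvv\mathcal{A}(x,n)v\vvv \geq \vvv\mathcal{A}(x,n)v_1\vvv - \vvv\mathcal{A}(x,n)v_2\vvv \geq (1-C\xi^n\theta)\vvv v_1\vvv$, and then convert to $\vvv v\vvv$ via $\vvv v_1\vvv \geq (1+\theta)^{-1}\vvv v\vvv \geq (1-\theta)\vvv v\vvv$, obtaining $\vvv\mathcal{A}(x,n)v\vvv \geq (1-C\xi^n\theta)(1-\theta)\vvv v\vvv \geq (1-\theta-C\xi^n\theta)\vvv v\vvv$. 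Taking $K_1 := C$, with $C$ and $\xi$ the constants of Theorem \ref{Tech}, then proves both assertions.

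Since the entire argument is elementary, I do not anticipate a genuine obstacle; the only steps demanding a moment's attention are the observation that $T^nx \in Z$ (needed for $P(T^nx)$ and $Q(T^nx)$ to be meaningful) and the passage from bounds phrased in terms of $\vvv P(x)v\vvv$ to bounds phrased in terms of $\vvv v\vvv$, both of which are dispatched by the inequality $\vvv v\vvv \leq (1+\theta)\vvv P(x)v\vvv$.
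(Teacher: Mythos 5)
Your proof is correct and follows essentially the same route as the paper: decompose $v$ via the invariant splitting $\mathcal{V}(x)\oplus\mathcal{W}(x)$, use the intertwining identities $\mathcal{A}(x,n)P(x)=P(T^nx)\mathcal{A}(x,n)$ and $\mathcal{A}(x,n)Q(x)=Q(T^nx)\mathcal{A}(x,n)$, and apply the isometry/contraction estimates from Theorem \ref{Tech}. The only difference is that for the cone membership you compare $\vvv Q(T^nx)\mathcal{A}(x,n)v\vvv$ directly with $\vvv P(T^nx)\mathcal{A}(x,n)v\vvv=\vvv P(x)v\vvv$ rather than detouring through $\vvv v\vvv$, which gives a slightly cleaner constant $K_1=C$ in place of the paper's $K_1=2CM$; both are valid.
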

\begin{proof}
Let $M=\sup_{z \in Z}\vvv Q(x)\vvv$ and $K_1=2CM$. If $v \in \mathfrak{C}(x,\theta)$ then clearly
\[\vvv v\vvv \leq \vvv P(x)v\vvv +\vvv Q(x)v\vvv \leq (1+\theta)\vvv P(x)v\vvv.\]
Using Theorem \ref{Tech} it follows that
\[\vvv P(T^nx)\mathcal{A}(x,n)v\vvv = \vvv\mathcal{A}(x,n)P(x)v\vvv = \vvv P(x)v\vvv \geq (1+\theta)^{-1}\vvv v\vvv\]
and
\[\vvv Q(T^nx)\mathcal{A}(x,n)v\vvv =\vvv\mathcal{A}(x,n)Q(x)v\vvv \leq C_1\xi^n\vvv Q(x)v\vvv \leq CM\theta\xi^n\vvv v\vvv.\]
Consequently
\[\vvv\mathcal{A}(x,n)v\vvv \geq \vvv P(T^nx)\mathcal{A}(x,n)v\vvv - \vvv Q(T^nx)\mathcal{A}(x,n)v\vvv\geq \left(1-\theta - K_1\theta\xi^n\right)\vvv v\vvv\]
and
\[\vvv Q(T^nx)\mathcal{A}(x,n)v\vvv \leq K_1\xi^n\theta\vvv P(T^nx)\mathcal{A}(x,n)v \vvv\]
as required. 
\end{proof}
We now prove Theorem \ref{QBWF}. Let $K_2,\alpha>0$ such that $\vvv P(x)-P(y)\vvv \leq K_2d(x,y)^\alpha$ for all $x,y \in Z$, let $N\geq1$ be large enough that $K_1\xi^N<1/3$, and let $(x_n),(m_n),(r_n)$ be as given by Proposition \ref{urghurghurgh}. Suppose that $n$ is large enough that $K_2 2^{\alpha(N-m_n)}<1/5$,  $m_n \geq N$, and all of the properties listed in Proposition \ref{urghurghurgh} are satisfied. Let $q=r_n/N$ and choose $z_1,\ldots,z_q$ such that $d(z_i,T^{(i-1)N}x_n) \leq 2^{-m_n}$ for each $i$. We then have
\[d(T^Nz_i,z_{i+1}) = \max\{d(T^Nz_1,T^{iN}x_n),d(T^{iN}x_n,z_{i+1})\}\leq 2^{N-m_n}\]
for $1 \leq i <q$, and similarly $d(T^N z_q,z_1)\leq 2^{N-m_n}$. If $v \in \mathfrak{C}(z_i,K_22^{\alpha(N-m_n)})$ for $1 \leq i < q$ then we may apply Lemmas \ref{cone1} and \ref{cone2} to deduce that $\mathcal{A}(z_i,N)v \in \mathfrak{C}(z_{i+1},K_22^{\alpha(N-m_n)})$ and $\vvv \mathcal{A}(z_i,N)v\vvv \geq (1-K_22^{1+\alpha(N-m_n)})\vvv v \vvv$, and similarly if $v \in \mathfrak{C}(z_q,K_22^{\alpha(N-m_n)})$ then $\mathcal{A}(z_q,N)v \in \mathfrak{C}(z_1,K_22^{\alpha(N-m_n)})$ and $\vvv \mathcal{A}(z_q,N)v\vvv \geq (1-K_22^{1+\alpha(N-m_n)})\vvv v \vvv$. It follows that if $v \in \mathfrak{C}(z_1,K_22^{\alpha(N-m_n)})$ then
\[\mathcal{A}(x_n,r_n)v = \mathcal{A}(z_q,N)\cdots \mathcal{A}(z_1,N)v \in \mathfrak{C}(z_1,K_22^{\alpha(N-m_n)})\]
(where we have used $m_n \geq N$) and
\[\vvv \mathcal{A}(x_n,r_n)v\vvv = \vvv\mathcal{A}(z_q,N)\cdots \mathcal{A}(z_1,N)v \vvv \geq (1-K_22^{1+\alpha(N-m_n)})^{r_n/N}\vvv v \vvv.\]
If we choose $v \in \mathfrak{C}(z_1,K_22^{\alpha(N-m_n)})$ with $\vvv v \vvv=1$, then since $r_n \leq n$ we deduce
\begin{align*}
\max_{1 \leq k \leq n} \varrho^-_k(\mathsf{A}) \geq \rho(\mathcal{A}(x_n,r_n))^{1/r_n} &= \left(\lim_{k \to \infty} \vvv \mathcal{A}(x_n,r_n)^k\vvv^{1/k}\right)^{1/r_n}\\ &\geq \left(\liminf_{k \to \infty} \vvv \mathcal{A}(x_n,r_n)^k v \vvv^{1/k}\right)^{1/r_n}\\& \geq (1-K_22^{1+\alpha(N-m_n)})^{1/N} \geq 1-K_22^{1+\alpha(N-m_n)}.\end{align*}
It follows that for all large enough $n$
\[\left|\varrho(\mathsf{A}) - \max_{1 \leq k \leq n} \varrho_k^-(\mathsf{A})\right| \leq \left(K_2 2^{1+\alpha N}\right)2^{-\alpha m_n}.\]
To complete the proof we have only to observe that the condition $m_n^{-1} \log n\to 0$ is equivalent to the assertion that $e^{-\varepsilon m_n} = O(1/n^r)$ for every $r,\varepsilon>0$.

\section{Discussion on possible extensions of Theorem \ref{QBWF}}\label{seven}

We shall now briefly discuss some of the limitations of the method of proof of Theorem \ref{QBWF} and the prospects for an extension of that theorem using the approach of the present article. 

Fix some compact set $\Omega \subset \mathbb{C}^d$, and consider the metric space $\Omega^{\mathbb{Z}}$ equipped with the metric $d[(x_i),(y_i)] = \sum_{i \in \mathbb{Z}} 2^{-|i|}\|x_i-y_i\|$ together with the shift map $T \colon \Omega^{\mathbb{Z}} \to \Omega^{\mathbb{Z}}$. Given a compact $T$-invariant set $Z \subseteq \Omega^{\mathbb{Z}}$, let us define
\[\varepsilon(Z,n) = \min_{1 \leq k \leq n} \inf_{T^kx=x} \max_{0 \leq i <k} \mathrm{dist}(T^ix,Z).\]
The magnitude of the error term in the proof of Theorem \ref{QBWF} is determined by the result of X. Bressaud and A. Quas in \cite{BQ} which asserts that if $\Omega$ is a finite set, then $\varepsilon(Z,n)=O(1/n^r)$ for every $r \in \mathbb{N}$. (To simplify our proof we in fact considered only approximations using periodic orbits whose period is divisible by $N$, but this requirement could be dispensed with without difficulty.) Bressaud and Quas' result is essentially sharp: see \cite{BQ} and related work in \cite{CM}. In the case where $\Omega$ is compact but not finite, the rate of decrease of $\varepsilon(Z,n)$ can be much slower, and this is the principal obstacle in extending Theorem \ref{QBWF} to the case in which $\mathsf{A}$ compact but infinite. The following simple example illustrates the problem.

Suppose that $\Omega = S^1 \subset \mathbb{C}$. Let $\gamma = (1-\sqrt{5})/2$ and define
\[Z = \left\{\left(e^{2\pi im \gamma} \omega\right)_{m \in \mathbb{Z}} \colon \omega \in S^1\right\},\]
which is clearly compact and $T$-invariant. Let $n \in \mathbb{N}$ and $1 \leq k \leq n$, and suppose that $x \in \Omega^{\mathbb{Z}}$ has $T^kx=x$ and $\max_{0 \leq j <k}\mathrm{dist}(T^jx,Z)\leq 2\varepsilon(Z,n)$. For $j=0,\ldots,k-1$ choose $z_j  = (e^{2\pi im\gamma}\omega_j)_{m\in\mathbb{Z}}\in Z$ such that $d(T^jx,z) \leq 2\varepsilon(Z,n)$, and define also $z_k=z_0$ and $\omega_k=\omega_0$. For $0 \leq j <k$ we have
\[|e^{2\pi i \gamma}\omega_j - \omega_{j+1}| \leq d(Tz_j,z_{j+i}) \leq d(Tz_j,T^{j+1}x)+d(T^{j+1}x,z_{j+1}) \leq 6\varepsilon(Z,n),\]
and it follows that
\[|e^{2\pi i k\alpha}\omega_0-\omega_0| \leq \sum_{j=0}^{k-1}\left|e^{2\pi ij\gamma}\omega_j - e^{2\pi i(j+1)\gamma}\omega_{j+1}\right| \leq 6k\varepsilon(Z,n).\]
However, it is well-known \cite{HW} that there exists $\delta>0$ such that $|e^{2\pi i m\alpha}-1| \geq \delta /m$ for every $m \in \mathbb{N}$, and we deduce that $\varepsilon(Z,n) \geq \delta/6k^2 \geq \delta/6n^2$. 

We conclude that if $\mathsf{A} \subset \Mat_d(\mathbb{C})$ is some compact set of matrices which is isometric to $S^1$, then there exists a minimal invariant set $Z \subset \mathsf{A}^{\mathbb{Z}}$ such that $\varepsilon(Z,n)$ is not $o(n^{-2})$. In particular, the method of Theorem \ref{QBWF} 
is in this case not strong enough even to show that
\[\left|\varrho(\mathsf{A}) - \max_{1 \leq k \leq n}\varrho^-_k(\mathsf{A})\right| = O\left(\frac{1}{n^{2\alpha}}\right),\]
where $\alpha$ is the H\"older exponent of the function $P$ given by Theorem \ref{Tech}. Since $\alpha>0$ is not explicitly known this estimate would anyway be inferior to the estimate of J. Bochi described in the introduction. If we wish to achieve further progress using the methods of the present article, therefore, the key step must be to show that for a given set $\mathsf{A} \subset \Mat_d(\mathbb{C})$ there is an extremal norm $\vvv \cdot \vvv$ for which the set
\begin{equation}\label{extraj}Y = \left\{x \in \mathsf{A}^{\mathbb{Z}} \colon \varrho(\mathsf{A})^{-n}\vvv \mathcal{A}(x,n)\vvv = 1\text{ }\forall\text{ }n \in \mathbb{N}\right\}\end{equation}
contains a minimal set $Z$ such that the quantity $\varepsilon(Z,n)$ decreases with some specified rapidity as a function of $n$.

It should be remarked that the explicit structure of the set $Y$ defined in \eqref{extraj} is for the most part unknown, and so the range of minimal sets $Z$ which may be contained in such a set $Y$ could in principle be quite limited, potentially leading to improved estimates in Theorem \ref{QBWF}. Indeed, the the \emph{finiteness conjecture} of J. Lagarias and Y. Wang, proposed in \cite{LW}, was equivalent to the statement that $Y$ must always contain a periodic orbit. The existence of counterexamples to the finiteness conjecture was established by T. Bousch and J. Mairesse \cite{BM}, with a simpler argument subsequently being given in \cite{BTV}. At present, the only well-understood examples of sets $\mathsf{A}$ in which $Y$ does not contain a periodic orbit have the property that the orbits in $Y$ are ``Sturmian'' or ``balanced'' \cite{BM}. When $Z$ consists of Sturmian orbits one may show that $\varepsilon(Z,n)$ decreases exponentially as a function of $n$, and in particular the arguments used in this article could be applied to obtain an exponential estimate in Theorem \ref{QBWF} in this special case.

\section{Appendix: Proof of the semi-uniform subadditive ergodic theorem}
The proof given below is a condensed exposition of \cite{StSt}, though the hypotheses are slightly weaker and the conclusion slightly stronger. Lemma \ref{A2} below is a mildly strengthened version of \cite[Theorem 1.9]{StSt}; that result generalises a lemma of M. Herman \cite[p.487]{H}, which in turn generalises a well-known theorem of Oxtoby \cite{Ox}. 
\begin{lemma}\label{A2}
Let $T \colon X \to X$ be a continuous map of a compact metric space, and let $f \colon X \to \mathbb{R}\cup\{-\infty\}$ be upper semi-continuous. Then
\[\lim_{n \to \infty} \sup_{x \in X}\frac{1}{n} \sum_{k=0}^{n-1}f(T^kx) = \sup_{\mu \in \mathcal{M}_T} \int f\,d\mu.\]
\end{lemma}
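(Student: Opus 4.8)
The plan is to set $a_n := \sup_{x\in X}\sum_{k=0}^{n-1} f(T^kx)$, so that the left-hand side is $\lim_{n\to\infty} a_n/n$, and to prove the two inequalities between this quantity and $\sup_{\mu\in\mathcal{M}_T}\int f\,d\mu$ separately. First I would record the basic facts. Since $f$ is upper semi-continuous on the compact space $X$ it is bounded above, and since $T$ is continuous each $x\mapsto f(T^kx)$ is upper semi-continuous; hence $g_n(x):=\sum_{k=0}^{n-1} f(T^kx)$ is upper semi-continuous and bounded above, and so attains its supremum $a_n$ at some $x_n\in X$. A one-line computation using $g_{n+m}(x)=g_n(x)+g_m(T^nx)$ shows $a_{n+m}\le a_n+a_m$, so by subadditivity (as in Section \ref{three}) $L:=\lim_{n\to\infty}a_n/n=\inf_{n\ge1}a_n/n$ exists in $\mathbb{R}\cup\{-\infty\}$ and equals the left-hand side.

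For the inequality $\sup_{\mu}\int f\,d\mu\le L$: given $\mu\in\mathcal{M}_T$, invariance gives $\int f(T^kx)\,d\mu(x)=\int f\,d\mu$ for every $k$, and since $f$ is bounded above no $\infty-\infty$ ambiguity arises, so $\int g_n\,d\mu=n\int f\,d\mu$. As $\int g_n\,d\mu\le\sup_X g_n=a_n$, dividing by $n$ and letting $n\to\infty$ yields $\int f\,d\mu\le L$; taking the supremum over $\mu\in\mathcal{M}_T$ completes this direction.

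For the reverse inequality $L\le\sup_\mu\int f\,d\mu$ — the substantive one — I would use the Krylov--Bogolioubov construction. We may assume $L>-\infty$. Form the empirical measures $\mu_n:=\frac1n\sum_{k=0}^{n-1}\delta_{T^kx_n}\in\mathcal{M}$, so that $\int f\,d\mu_n=\frac1n g_n(x_n)=a_n/n\to L$. By weak-$*$ compactness of $\mathcal{M}$, pass to a subsequence $\mu_{n_j}\to\mu$ weak-$*$. The standard observation that $\mu_n$ and its pushforward $T_*\mu_n$ differ by $\frac1n(\delta_{x_n}-\delta_{T^nx_n})$, which tends weak-$*$ to zero, combined with the weak-$*$ continuity of $\nu\mapsto T_*\nu$, shows that $\mu$ is $T$-invariant. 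Finally, by Lemma \ref{A1} the functional $\nu\mapsto\int f\,d\nu$ is upper semi-continuous on $\mathcal{M}$, so $\int f\,d\mu\ge\limsup_j\int f\,d\mu_{n_j}=L$; hence $\sup_{\nu\in\mathcal{M}_T}\int f\,d\nu\ge\int f\,d\mu\ge L$, as required.

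The proof is essentially routine given Lemma \ref{A1}, so I expect the only delicate points to be bookkeeping: one must check that the exchange $\int g_n\,d\mu=n\int f\,d\mu$ is legitimate despite the possible value $-\infty$ (it is, because $f$ is bounded above), and one should note that it is precisely the \emph{upper} semi-continuity of $\nu\mapsto\int f\,d\nu$ that is needed in the last step, since that gives $\limsup$-control along $\mu_{n_j}$ in the correct direction, the convergence $\int f\,d\mu_{n_j}\to L$ being already known.
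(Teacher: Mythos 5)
Your proof is correct and takes essentially the same route as the paper: the easy inequality follows from invariance and boundedness above, and the substantive inequality uses the Krylov--Bogolioubov construction on empirical measures along the maximizing points together with Lemma \ref{A1} (upper semi-continuity of $\nu\mapsto\int f\,d\nu$) to extract a weak-$*$ limit point that is $T$-invariant and has $\int f\,d\mu\ge L$. The only cosmetic difference is that you take $x_n$ to be an exact maximizer of $g_n$ and pass to a convergent subsequence, whereas the paper works with a sequence achieving the level $\lambda$ infinitely often and then takes a weak-$*$ limit point; these are interchangeable.
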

\begin{proof}
It is easy to show that the former quantity is an upper bound for the latter. To show the reverse direction, suppose that $(x_n)_{n=1}^\infty$ satisfies $(1/n)\sum_{k=0}^{n-1}f(T^kx_n) \geq \lambda$ for infinitely many $n \in \mathbb{N}$. Then using Lemma \ref{A1} and the compactness of $\mathcal{M}$ we may choose a weak-* limit point $\mu$ of the sequence of measures $(\mu_n)$ given by $\mu_n = (1/n) \sum_{k=0}^{n-1} \delta_{T^kx_n}$ having the property that $\int f\,d\mu \geq \lambda$. Since clearly $|\int g\,d\mu_n - \int (g \circ T)\,d\mu_n| \to 0$ for every continuous $g$ we have $\mu \in \mathcal{M}_T$.
\end{proof}
\begin{lemma}\label{A3}
Let $Z$ be a compact topological space, and let $(g_n)_{n=1}^\infty$ be a sequence of upper semi-continuous functions from $Z$ into $\mathbb{R}\cup\{-\infty\}$ such that $(g_n(x))_{n=1}^\infty$ is subadditive for every $x \in Z$. Then
\begin{equation}\label{fco}\lim_{n \to \infty} \sup_{z \in Z} \frac{1}{n}g_n(z) = \sup_{z \in Z}\lim_{n \to \infty} \frac{1}{n} g_n(z).\end{equation}
\end{lemma}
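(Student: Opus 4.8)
The plan is to prove \eqref{fco} by establishing the two inequalities separately. The inequality $\geq$ is immediate: for each fixed $z \in Z$ the sequence $(g_n(z))$ is subadditive, so $\lim_n \tfrac1n g_n(z) = \inf_n \tfrac1n g_n(z) \leq \tfrac1n g_n(z)$ for every $n$, and hence $\sup_z \lim_n \tfrac1n g_n(z) \leq \inf_n \sup_z \tfrac1n g_n(z) = \lim_n \sup_z \tfrac1n g_n(z)$, where the last equality is the standard fact that a subadditive sequence (here $a_n := \sup_z g_n(z)$, which is subadditive since $g_{n+m}(z) \leq g_n(z) + g_m(z)$ for every $z$) has $\lim a_n/n = \inf a_n/n$. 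Note $a_n$ could a priori equal $-\infty$, but the convention $\log 0 = -\infty$ and the subadditive limit formula still make sense in $\RR \cup \{-\infty\}$, so this causes no trouble.

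For the reverse inequality $\leq$, write $\lambda := \lim_n \sup_z \tfrac1n g_n(z) \in \RR \cup \{-\infty\}$; if $\lambda = -\infty$ there is nothing to prove, so assume $\lambda \in \RR$. I would show that for each fixed $m \in \NN$ there exists a point $z_m \in Z$ with $\tfrac1m g_m(z_m) \geq \lambda$; granting this, upper semi-continuity lets us pass to a limit. Indeed, fix $m$ and consider, for $n = qm + s$ with $0 \leq s < m$, the subadditivity estimate $g_n(z) \leq g_m(z) + g_m(z) + \cdots + g_m(z) + g_s(z)$ — but this is not quite right because subadditivity here is $g_{n+m}(z) \le g_n(z) + g_m(z)$ with no dynamics, so it only gives $g_{qm}(z) \leq q\, g_m(z)$ and $g_{qm+s}(z) \leq q\,g_m(z) + g_s(z)$. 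Taking the supremum over $z$ and letting $q \to \infty$ (so $n/q \to m$), we get $m\lambda = \lim_q \tfrac{1}{q} \sup_z g_{qm+s}(z) \leq \sup_z g_m(z) + \lim_q \tfrac1q \sup_z g_s(z) = \sup_z g_m(z)$, using that $\sup_z g_s(z) < \infty$. Hence $\sup_z g_m(z) \geq m\lambda$ for every $m$, and since $g_m$ is upper semi-continuous on the compact space $Z$ this supremum is attained at some $z_m \in Z$.

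Now I would extract the conclusion. Applying subadditivity without dynamics again, for any fixed $m$ and any $k \in \NN$ we have $g_{km}(z) \leq k\, g_m(z)$ — wait, this goes the wrong way to bound $\lim_n \tfrac1n g_n(z_m)$ from below. Instead the cleanest route: we have produced, for each $m$, a point $z_m$ with $g_m(z_m) \geq m\lambda$. Pass to a subsequence along which $z_m \to z_* \in Z$ (compactness). For any fixed $N$, upper semi-continuity of $g_N$ gives $g_N(z_*) \geq \limsup_{m} g_N(z_m)$; but for $m \geq N$, subadditivity yields $g_m(z_m) \leq g_N(z_m) + g_{m-N}(z_m)$, so $g_N(z_m) \geq g_m(z_m) - g_{m-N}(z_m) \geq m\lambda - \sup_z g_{m-N}(z)$, and since $\tfrac{1}{m-N}\sup_z g_{m-N}(z) \to \lambda$ this lower bound, divided by nothing, tends to $+\infty$ unless $\lambda$ handling is done carefully — the right normalization is to not divide, giving $g_N(z_*) \geq \limsup_m \big(m\lambda - \sup_z g_{m-N}(z)\big)$, which may be $+\infty$, already contradicting finiteness unless we are more careful. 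The honest fix is to work with $\tfrac1N g_N$: one shows directly that $\lim_N \tfrac1N g_N(z_*) \geq \lambda$ by combining the estimate $g_N(z_*) \geq g_m(z_m) - g_{m-N}(z_m)$ with a diagonal choice of $m = m(N)$ growing slowly enough that $\tfrac1N \sup_z g_{m-N}(z) \to \lambda$ too. Since $\lim_N \tfrac1N g_N(z_*)$ exists (subadditivity) and is $\geq \lambda$, we get $\sup_z \lim_n \tfrac1n g_n(z) \geq \lambda$, completing the proof.

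The main obstacle is precisely this last diagonalization: producing a single point $z_*$ whose \emph{limiting} average $\lim_n \tfrac1n g_n(z_*)$ (not just one term $\tfrac1m g_m(z_*)$) is at least $\lambda$, using only upper semi-continuity and termwise subadditivity without any invariance under a map. The trick will be to choose the subsequence $m(N)$ so that $N/m(N) \to 0$ slowly, so that the "error term" $\tfrac1N g_{m(N)-N}(z_{m(N)})$ contributes at most a vanishing deficit; I expect this to work but it requires care with the $-\infty$ convention and with the fact that the relevant suprema are attained only because $Z$ is compact and the $g_n$ are upper semi-continuous.
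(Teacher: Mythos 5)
Your strategy is a genuinely different route from the paper's: you find, for each $m$, a maximizer $z_m$ of $g_m$ so that $g_m(z_m) \geq m\lambda$ (where $\lambda$ is the left-hand side of \eqref{fco}), pass to a convergent subsequence $z_{m_j} \to z_*$, and try to show $\lim_N (1/N)\,g_N(z_*) \geq \lambda$ via upper semi-continuity; the paper instead covers $Z$ by finitely many open sets $U_i$ on each of which some fixed $(1/n_i)\,g_{n_i} < \lambda$ (for $\lambda$ slightly above the right-hand side) and combines these at the single time $\hat n = \prod_i n_i$. Your route is salvageable, but the last step as written has a real gap, which you half-recognize. The single-step estimate $g_N(z_m) \geq g_m(z_m) - g_{m-N}(z_m) \geq m\lambda - \sup_Z g_{m-N}$ only controls the deficit by $\sup_Z g_{m-N} - (m-N)\lambda$, a nonnegative quantity that is merely $o(m)$, not bounded; for fixed $N$ the resulting lower bound on $g_N(z_m)$ can therefore drift to $-\infty$ as $m\to\infty$. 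The diagonalization $m=m(N)$ you sketch cannot repair this: to invoke upper semi-continuity you must fix the convergent subsequence $(z_{m_j})$, and hence the single point $z_*$, once and for all, and then bound $g_N(z_*)$ for \emph{every} $N$ from that same subsequence — so you cannot tie $m$ to $N$. (Moreover the displayed inequality $g_N(z_*) \geq g_m(z_m)-g_{m-N}(z_m)$ with $z_*$ on the left is simply false; only the version with $z_m$ on the left holds, and it needs a limit passage.)

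The repair is to iterate subadditivity rather than apply it once. Writing $m = qN + r$ with $0 \le r < N$ (and $g_0 \equiv 0$), subadditivity at the point $z_m$ gives $g_m(z_m) \le q\,g_N(z_m) + g_r(z_m)$, hence
\[
g_N(z_m) \;\ge\; \frac{g_m(z_m) - g_r(z_m)}{q} \;\ge\; \frac{m\lambda - \max_{0\le r<N}\sup_Z g_r}{q},
\]
where the maximum on the right is a \emph{finite constant} independent of $m$ (each $g_r$ is upper semi-continuous on compact $Z$, and cannot be identically $-\infty$ once $\lambda>-\infty$). Since $m/q \to N$ as $m\to\infty$, the right-hand side tends to $N\lambda$, so $\liminf_{m} g_N(z_m) \geq N\lambda$ for every fixed $N$. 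Upper semi-continuity along the convergent subsequence then yields $g_N(z_*) \geq N\lambda$ for all $N$, i.e.\ $\lim_N (1/N)\,g_N(z_*) \geq \lambda$, and the argument closes. The point you were missing is that repeated subadditivity bounds the error by a fixed constant divided by $q\to\infty$, rather than by a sublinear-but-possibly-unbounded quantity.
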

\begin{proof}
Let $\lambda>\sup_{z \in Z} \lim_{n \to \infty}(1/n)g_n(z)$. For each $z \in Z$ there exists $n_z>0$ such that $(1/n_z)g_{n_z}(z)<\lambda$, and by upper semi-continuity there is an open neighbourhood $U_z$ of $z$ such that $(1/n_z)g_{n_z}(y)<\lambda$ for all $y\in U_z$. Clearly $\{U_z\colon z \in Z\}$ is an open cover of $Z$ and so we may passing to a finite subcover to deduce that there exist open sets $U_1,\ldots,U_d$ covering $Z$ and integers $n_1,\ldots,n_d$ such that if $z \in U_i$ then $(1/n_i)g_{n_i}(z)<\lambda$. Now take $\hat n = \prod_{i=1}^d n_i$ and for convenience define $m_i = \hat n / n_i \in \mathbb{N}$. If $z \in Z$, then choosing $i$ such that $z \in U_i$ we obtain
\[\frac{1}{\hat n}g_{\hat n}(z) \leq \frac{1}{m_i}\sum_{k=0}^{m_i-1} \frac{1}{n_i}f_{n_i}(z) < \lambda\]
whence $\sup_{z \in Z} (1/\hat{n}) g_{\hat{n}}(z) <\lambda$. Using subadditivity we deduce
\[\lim_{n\to \infty}\sup_{z \in Z} \frac{1}{n}g_n(z) = \inf_{n \geq 1} \sup_{z \in Z}\frac{1}{n}g_n(z) <\lambda,\]
and taking the infimum over $\lambda$ gives one direction of inequality in \eqref{fco}. The reverse inequality is straightforward: for any $y \in Z$ it is clear that
\[\lim_{n \to \infty} \frac{1}{n}g_n(y) \leq \lim_{n \to \infty}\sup_{z \in Z}\frac{1}{n}g_n(z),\]
and taking the supremum over $y$ yields the required result.\end{proof}
\emph{Proof of Theorem \ref{SUSAET}.}
Choose any real number $\lambda > \sup_\mu \lim_n (1/n)\int f_n\,d\mu$. For each $n \in \mathbb{N}$ define a function $g_n \colon \mathcal{M}_T \to \mathbb{R}\cup\{-\infty\}$ by $g_n(\mu) = \int f_n\,d\mu$. By Lemma \ref{A1} this function is upper semi-continuous, and clearly $(g_n(\mu))_{n=1}^\infty$ is subadditive for every $\mu$. By Lemma \ref{A3} we obtain
\begin{equation}\label{ADE1}
 \lim_{n \to \infty}\sup_{\mu\in \mathcal{M}_T} \frac{1}{n}\int f_n\,d\mu
 = \sup_{\mu \in \mathcal{M}_T} \lim_{n \to \infty}\frac{1}{n}\int f_n\,d\mu
\end{equation}
and it follows that there exists $n_1>0$ such that $(1/n_1)\int f_{n_1}\,d\mu <\lambda$ for all $\mu \in \mathcal{M}_T$. Let $M=\sup f_1$. Applying Lemma \ref{A2} to $f_{n_1}$ it follows that for all sufficiently large integers $n_2$ we have uniformly for each $x \in X$
\begin{align*}n_1 f_{n_1n_2}(x) &\leq \sum_{r=0}^{n_1-1}\left(f_r(x) + f_{n_1-r}\left(T^{n_1(n_2-1)+r}x\right)+\sum_{q=0}^{n_2-2} f_{n_1}\left(T^{qn_1+r}x\right) \right)\\
&\leq Mn_1^2 + \sum_{k=0}^{n_1(n_2-1)-1}f_{n_1}\left(T^kx\right) < Mn_1^2 + n_1^2(n_2-1)\lambda\end{align*}
where we have used the notation $f_0 \equiv 0$ to simplify the presentation. Hence,
\[\inf_{r \geq 1} \sup_{x \in X}\frac{1}{r} f_r(x) \leq \lim_{n_2 \to \infty} \frac{Mn_1^2+n_1^2(n_2-1)\lambda}{n_1^2n_2} = \lambda.\]
Taking the infimum over $\lambda$ and using subadditivity we obtain
\begin{equation}\label{ADE2}\lim_{n \to \infty}\sup_{x \in X}\frac{1}{n}f_n(x) = \inf_{n \geq 1}\sup_{x \in X}\frac{1}{n}f_n(x) \leq \sup_{\mu \in \mathcal{M}_T} \lim_{n \to \infty}\frac{1}{n}\int f_n\,d\mu.\end{equation}
By Lemma \ref{nonemptiness} there exists an ergodic measure $\nu$ which attains this last supremum. Applying the subadditive ergodic theorem it follows that
\begin{equation}\label{ADE3}\sup_{\mu \in \mathcal{M}_T} \lim_{n \to \infty}\frac{1}{n}\int f_n\,d\mu \leq\sup_{x \in X} \limsup_{n \to \infty} \frac{1}{n}f_n(x).\end{equation}
Since for every $z \in X$ we clearly have
\[\limsup_{n \to \infty} \frac{1}{n}f_n(z) \leq \lim_{n \to \infty} \sup_{x \in X}\frac{1}{n}f_n(x)\]
we deduce
\begin{equation}\label{ADE4}\sup_{x \in X} \limsup_{n \to \infty} \frac{1}{n}f_n(x) \leq \lim_{n \to \infty} \sup_{x\in X}\frac{1}{n}f_n(x).\end{equation}
Combining \eqref{ADE1}, \eqref{ADE2}, \eqref{ADE3} and \eqref{ADE4} serves to complete the proof.
\section{Acknowledgments}
This research was supported by EPSRC grant EP/E020801/1. The author would like to thank M. Pollicott for suggesting the reference \cite{FLQ}, and J. Hirsch for pointing out an error in an earlier version of this paper.
\bibliographystyle{amsplain}
\bibliography{QBWF}
\end{document}